\newtheorem{theorem}{Theorem}[section]
\newtheorem{proposition}[theorem]{Proposition}
\newtheorem{lemma}[theorem]{Lemma}
\newtheorem{definition}[theorem]{Definition}
\newtheorem{example}[theorem]{Example}
\newtheorem{property}[theorem]{Property}
\newtheorem{corollary}[theorem]{Corollary}
\newcommand{\la}{\langle}
\newcommand{\ra}{\rangle}
\newcommand{\xad}{x_\alpha^\delta}
\newcommand{\xdag}{x^\dagger}
\title{On $\mathbf{\ell^1}$-regularization under continuity of the forward operator in weaker topologies}
\author{Daniel Gerth$^*$ and Bernd Hofmann\footnote{Chemnitz University of Technology, Faculty of Mathematics, 09107 Chemnitz, Germany,  \{daniel.gerth,bernd.hofmann\}@mathematik.tu-chemnitz.de}}
\begin{document}

\maketitle

\abstract{Our focus is on the stable approximate solution of linear operator equations based on noisy data by using  $\ell^1$-regularization as a sparsity-enforcing version of Tikhonov regularization. We summarize recent results on situations where the sparsity of the solution slightly fails. In particular, we show how the recently established theory for weak*-to-weak continuous linear forward operators can be extended to the case of weak*-to-weak* continuity. This might be of interest when the image space is non-reflexive. We discuss existence, stability and convergence of regularized solutions. For injective operators, we will formulate convergence rates by exploiting variational source conditions. The typical rate function obtained under an ill-posed operator is strictly concave and the degree of failure of the solution sparsity has an impact on its behavior. Linear convergence rates just occur in the two borderline cases of proper sparsity, where the solutions belong to $\ell^0$, and of well-posedness. For an exemplary operator, we demonstrate that the technical properties used in our theory can be verified in practice. In the last section, we briefly mention the difficult case of oversmoothing regularization where $x^\dag$ does not belong to $\ell^1$.}

\section{Introduction}
\label{sec:1}

We are going to deal with the stable solution of linear operator equations
\begin{equation} \label{eq:opeq}
A x =y
\end{equation}
with a {\it bounded linear} operator $A:\ell^1 \to Y$, mapping from the {\it non-reflexive} infinite dimensional space $\ell^1$ of absolutely summable infinite real or complex sequences to an {\it infinite dimensional Banach space} $Y$. Instead of the exact right-hand side $y$ from the range $\mathcal{R}(A)$ of $A$ we assume to have only noisy data $y^\delta \in Y$ available which satisfy the deterministic noise model
\begin{equation} \label{eq:noise}
\|y-y^\delta\|_Y \le \delta
\end{equation}
with prescribed noise level $\delta>0$. Our focus for solving equation \eqref{eq:opeq} is on the method of $\ell^1$-regularization,
where for regularization parameters $\alpha>0$ the minimizers $\xad$ of the extremal problem
\begin{equation} \label{eq:TIK}
\frac{1}{p}\|A x - y^\delta\|_Y^p + \alpha\,\|x\|_{\ell^1} \to \min, \qquad \mbox{subject to}\quad x \in \ell^1,
\end{equation}
are used as approximate solutions. This method is a sparsity-enforcing version of Tikhonov regularization, possessing applications in different branches of imaging, natural sciences, engineering and mathematical finance. It was
comprehensively analyzed with all its facets and varieties in the last fifteen years (cf., e.g.,~the corresponding chapters in the books \cite{Scherzered11,Scherzetal09,Schusterbuch12} and the papers \cite{AnzHofRam13,BreLor09,Daub04,FleHeg14,Grasm10,Grasmei08,Grasmei11,Lorenz08,Ramlau08,RamRes10}). We restrict our considerations to {\it injective} operators $A$
such that the element $\xdag=(\xdag_1,\xdag_2,...) \in \ell^1$ denotes the uniquely determined solution to \eqref{eq:opeq}. For assertions concerning the case of non-injective
operators $A$ in the context of $\ell^1$-regularization, we refer to \cite{Flemming16}. In the non-injective case, even the $\ell^1$-norm minimizing solutions need not be uniquely determined. As a consequence, very technical
conditions must be introduced in order to formulate convergence assertions and rates. In our framework, the Propositions \ref{prop:weakstarweak} and \ref{prop:weakstarweakstar} below would have to be adapted, which however is out of the scope of this paper.

With the paper \cite{BurFleHof13} as starting point and preferably based on variational source conditions first introduced in
\cite{HKPS07}, convergence rates for $\ell^1$-regularization of operator equations \eqref{eq:opeq} and variants like elastic-net
 \begin{equation} \label{eq:NET}
\frac{1}{p}\|A x - y^\delta\|_Y^p+\alpha\left(\frac{1}{2}\|x\|_{\ell^2}+\eta\|x\|_{\ell^1}\right) \to \min, \mbox{ subject to }  x \in \ell^1
\end{equation}
have been verified under the condition that the sparsity assumption slightly fails (cf.~\cite{CHZ17,FleHofVes15,FleHofVes16}). This means that the solution $\xdag \in \ell^1$ is not sparse, abbreviated as $\xdag \notin \ell^0$.
Most recently in \cite{FleGer17}, the first author and Jens Flemming have shown that complicated conditions on $A$, usually supposed for proving convergence rates in $\ell^1$-regularization (cf.~\cite[Assumption~2.2~(c)]{BurFleHof13} and condition \eqref{eq:BFH} below), can be simplified to the requirement of weak$^*$-to-weak continuity of the injective operator $A$. This seems to be convincing if $Y$ is a reflexive Banach space. The present paper, however, makes assertions also in the case that $A$ is only weak$^*$-to-weak$^*$ continuous, which is of interest for
non-reflexive Banach spaces $Y$. Moreover, we complement results from \cite{FleGer17}, for example with respect to the well-posed situation.

The paper is organized as follows. In Section \ref{sec:2} we recall basic properties of $\ell^1$-regularization. We proceed in Section \ref{sec:3} by discussing the ill-posedness of equation \eqref{eq:opeq}. We mention that in particular variational source conditions  allow us to deal with the ill-posedness and yield convergence rates. For our convergence analysis a particular property of the operator is necessary. In Section \ref{sec:4} we show that weak*-to-weak continuity and injectivity imply this property. Interestingly, the same property holds under weak*-to-weak* continuity and injectivity as shown in Section \ref{sec:5}. There we also derive the convergence rates which hold for both continuity assumptions. Finally, we demonstrate that even the case of a well-posed operator is reflected in our property in Section \ref{sec:6}. There we also hint at the case of oversmoothing regularization, which occurs when one employs $\ell^1$-regularization although the true solution $x^\dag$ does not belong to $\ell^1$.

\section{Preliminaries and basic propositions}
\label{sec:2}

In this paper, we consider the variant \eqref{eq:TIK} of $\ell^1$-regularization with some exponent $p>1$ and with a regularization parameter $\alpha>0$. Let $y \in \mathcal{R}(A)$. Then, due to the injectivity of $A$, there exists a uniquely determined solution $\xdag \in \ell^1$ to \eqref{eq:opeq}. With the following Proposition~\ref{pro:prorep} we recall the assertions of Proposition~2.8 in \cite{BurFleHof13}
with respect to existence, stability, convergence and sparsity of the $\ell^1$-regularized solutions $\xad$. The proof ibidem emphasizes the fact that most of these properties follow directly from the general theory of Tikhonov regularization in Banach spaces (cf., e.g., \cite[Section~3]{HKPS07} and \cite[Section~4.1]{Schusterbuch12}). Since for $p>1$ the Tikhonov functional to be minimized in \eqref{eq:TIK} is strictly convex, the regularized solutions
$\xad$, whenever they exist, are uniquely determined for all $\alpha>0$.

\begin{proposition} \label{pro:prorep}
Let $A: \ell^1 \to Y$ be weak$^*$-to-weak continuous, i.e., $x_n \rightharpoonup^* x_0$ in $\ell^1$ implies that $Ax_n \rightharpoonup Ax_0$ in $Y$.
Then for all $\alpha>0$ and all $y^\delta \in Y$ there exist uniquely determined minimizers $\xad \in \ell^1$ of the Tikhonov functional from \eqref{eq:TIK}. These regularized solutions are sparse, i.e., $\xad \in  \ell^0$, and they are stable with respect to the data, i.e., small perturbations in $y^\delta$ in the norm topology of $Y$ lead only to small changes in $\xad$ with respect to the weak$^*$-topology in $\ell^1$.  If $\delta_n \to 0$ and if the regularization parameters $\alpha_n=\alpha(\delta_n,y^{\delta_n})$ are chosen such that $\alpha_n \to 0$ and $\frac{\delta_n^p}{\alpha_n}\to 0$ as $n \to \infty$, then  $x_{\alpha_n}^{\delta_n}$ converges in the weak$^*$-topology of $\ell^1$ to the
uniquely determined solution $\xdag$ of the operator equation \eqref{eq:opeq}. Moreover we have $\lim \limits_{n \to \infty} \|x_{\alpha_n}^{\delta_n}\|_{\ell^1}=\|\xdag\|_{\ell^1}$, which, as a consequence of the weak$^*$ Kadec-Klee property in $\ell^1$ (see, e.g.,~\cite[Lemma~2.2]{BoHo13}), implies norm convergence
$$\lim \limits_{n \to \infty} \|x_{\alpha_n}^{\delta_n}-\xdag\|_{\ell^1}=0.$$
\end{proposition}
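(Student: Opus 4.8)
The plan is to obtain the five assertions --- existence and uniqueness, sparsity, stability, weak$^*$ convergence, and norm convergence --- by reducing the first, third and fourth to the general theory of Tikhonov regularization in Banach spaces (cf.~\cite[Section~3]{HKPS07}, \cite[Section~4.1]{Schusterbuch12}, \cite{BurFleHof13}) once the correct \emph{weak$^*$} topology on the pairing $\la c_0, \ell^1\ra$ has been installed, and then treating the genuinely $\ell^1$-specific final steps separately. The structural facts I would rely on are: the $\ell^1$-norm is the dual norm of $(c_0)^*=\ell^1$, hence weak$^*$ lower semicontinuous with weak$^*$ sequentially compact sublevel sets (Banach--Alaoglu together with separability of $c_0$, which renders the weak$^*$ topology metrizable on bounded sets); and the $Y$-norm is weakly lower semicontinuous. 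For existence I would run the direct method: for fixed $\alpha>0$ and $y^\delta$ a minimizing sequence $(x_n)$ obeys $\alpha\|x_n\|_{\ell^1}\le T(x_n)\le T(0)<\infty$ and is thus bounded in $\ell^1$, so a subsequence satisfies $x_{n_k}\rightharpoonup^* \bar x$; weak$^*$-to-weak continuity gives $Ax_{n_k}\rightharpoonup A\bar x$, and combining weak lower semicontinuity of $v\mapsto\frac1p\|v-y^\delta\|_Y^p$ with weak$^*$ lower semicontinuity of $\|\cdot\|_{\ell^1}$ shows that $\bar x$ is a minimizer. Uniqueness is exactly the strict convexity already recorded before the proposition, whence $\bar x=\xad$.

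For sparsity I would use the Fermat condition $0\in\partial T(\xad)$, which yields a dual element $\xi\in Y^*$ with $-\tfrac1\alpha A^*\xi\in\partial\|\xad\|_{\ell^1}$; thus $|(A^*\xi)_k|\le\alpha$ for every $k$, with $(A^*\xi)_k=-\alpha\,\mathrm{sign}(\xad_k)$ wherever $\xad_k\ne0$. The crucial observation is that the canonical unit sequences satisfy $e_k\rightharpoonup^* 0$ in $\ell^1$, so by weak$^*$-to-weak continuity $Ae_k\rightharpoonup 0$ in $Y$ and therefore $(A^*\xi)_k=\la\xi,Ae_k\ra\to 0$. Consequently $|(A^*\xi)_k|<\alpha$ for all large $k$, which forces $\xad_k=0$ there, so only finitely many components survive, i.e.~$\xad\in\ell^0$.

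Stability and convergence both rest on the same comparison scheme. For stability I would compare the minimizer for perturbed data $y_n\to y^\delta$ against the fixed competitor $\xad$; for convergence I would compare $x_{\alpha_n}^{\delta_n}$ against $\xdag$, using $A\xdag=y$ and $\|y-y^{\delta_n}\|_Y\le\delta_n$ to obtain
\[
\tfrac1p\|Ax_{\alpha_n}^{\delta_n}-y^{\delta_n}\|_Y^p+\alpha_n\|x_{\alpha_n}^{\delta_n}\|_{\ell^1}\le \tfrac1p\delta_n^p+\alpha_n\|\xdag\|_{\ell^1}.
\]
This bounds the penalty, forces $\|Ax_{\alpha_n}^{\delta_n}-y^{\delta_n}\|_Y\to0$ and hence $Ax_{\alpha_n}^{\delta_n}\to y$, and after dividing by $\alpha_n$ and invoking $\delta_n^p/\alpha_n\to0$ gives $\limsup_n\|x_{\alpha_n}^{\delta_n}\|_{\ell^1}\le\|\xdag\|_{\ell^1}$. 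Any weak$^*$ cluster point $\bar x$ of the bounded sequence satisfies $A\bar x=y$ by weak$^*$-to-weak continuity and uniqueness of weak limits, hence $\bar x=\xdag$ by injectivity; the metrizable subsequence argument then upgrades this to $x_{\alpha_n}^{\delta_n}\rightharpoonup^*\xdag$. Weak$^*$ lower semicontinuity supplies the reverse estimate $\|\xdag\|_{\ell^1}\le\liminf_n\|x_{\alpha_n}^{\delta_n}\|_{\ell^1}$, so the norms converge, and feeding $x_{\alpha_n}^{\delta_n}\rightharpoonup^*\xdag$ together with $\|x_{\alpha_n}^{\delta_n}\|_{\ell^1}\to\|\xdag\|_{\ell^1}$ into the weak$^*$ Kadec--Klee property \cite[Lemma~2.2]{BoHo13} yields the asserted norm convergence.

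I expect the main obstacle to be bookkeeping the two different weak topologies consistently: the penalty has to be controlled in the weak$^*$ topology of $\ell^1$, whereas the fidelity lives in the weak topology of $Y$, and it is precisely weak$^*$-to-weak continuity of $A$ that couples them so that the Tikhonov functional becomes weak$^*$ sequentially lower semicontinuous with weak$^*$ sequentially compact sublevel sets. The second delicate point is the sparsity step, where the relation $Ae_k\rightharpoonup 0$ --- again a consequence of weak$^*$-to-weak continuity --- must be verified in order to push the components of the dual certificate $A^*\xi$ below the threshold $\alpha$ eventually.
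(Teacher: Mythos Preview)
Your proposal is correct and follows essentially the same route as the paper, which does not give an independent proof but recalls Proposition~2.8 of \cite{BurFleHof13} and notes that existence, stability and convergence follow from the general Tikhonov theory in \cite[Section~3]{HKPS07} and \cite[Section~4.1]{Schusterbuch12} once the weak$^*$ topology on $\ell^1=(c_0)^*$ is installed. Your sparsity argument via the optimality condition and the observation $(A^*\xi)_k=\langle\xi,Ae^{(k)}\rangle\to 0$ (equivalently $\mathcal{R}(A^*)\subseteq c_0$, cf.\ Proposition~\ref{pro:flid1}) is exactly the mechanism used in \cite{BurFleHof13}, and your derivation of norm convergence from weak$^*$ convergence plus convergence of norms via the Kadec--Klee property matches the paper verbatim.
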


The weak$^*$-to-weak continuity of $A$ in combination with the {\it stabilizing property} of the penalty functional $\|x\|_{\ell^1}$ in $\ell^1$ together with an appropriate choice of the regularization parameter $\alpha>0$ represent basic assumptions of Proposition~\ref{pro:prorep}. In contrast to regularization in reflexive Banach space, where the level sets of the norm functional are weakly compact, we have in $\ell^1$ weak$^*$ compactness of the corresponding level sets
according to the sequential Banach-Alaoglu theorem (cf., e.g., \cite[Theorems~3.15 and 3.17]{Rudin91}), which we present in form of the following lemma.

\begin{lemma} \label{lem:Alaoglu}
The closed unit ball of a Banach space $X$ is compact in the weak$^*$-topology if there is a separable Banach space $Z$ (predual space) with dual $Z^*=X$. Then any bounded sequence $\{x_n\}_{n \in \mathbb{N}}$ in $X$  has a weak$^*$-convergent subsequence $\{x_{n_k}\}_{k \in \mathbb{N}}$ such that $x_{n_k} \rightharpoonup^* x_0 \in X$ as $k \to \infty$.
\end{lemma}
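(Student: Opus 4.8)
The plan is to split the statement into its two natural components: the weak$^*$-compactness of the closed unit ball $B_X := \{x \in X : \|x\| \le 1\}$, and the extraction of a convergent subsequence from an arbitrary bounded sequence. The first is the classical Banach--Alaoglu theorem and needs no separability; the second is where the separability of the predual $Z$ enters decisively. I would prove the two halves separately and then remark on how they are linked.

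For the compactness part, I would embed $B_X$ into a product of scalar discs and invoke Tychonoff's theorem. Concretely, set $P := \prod_{z \in Z} D_z$ with $D_z := \{\lambda \in \mathbb{K} : |\lambda| \le \|z\|_Z\}$, each factor compact, so that $P$ is compact in the product (i.e.\ pointwise-convergence) topology. The evaluation map $x \mapsto (\la x, z\ra)_{z \in Z}$ carries $B_X$ homeomorphically onto its image once $B_X$ is given the weak$^*$ topology, since that topology is by definition the coarsest making all the functionals $x \mapsto \la x,z\ra$ continuous. The image is exactly the set of tuples $(\lambda_z)$ satisfying the closed linearity constraints $\lambda_{az+bw} = a\lambda_z + b\lambda_w$ together with $|\lambda_z| \le \|z\|$; being a closed subset of the compact space $P$, it is compact. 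This yields weak$^*$-compactness of $B_X$, and hence of any ball $M\cdot B_X$.

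For the sequential part I would argue directly by a diagonal extraction, which simultaneously exhibits why separability is needed. Let $\{z_k\}_{k\in\mathbb{N}}$ be dense in $Z$ and let $\{x_n\}$ be bounded, say $\|x_n\| \le M$. For each fixed $k$ the scalar sequence $\la x_n, z_k\ra$ is bounded by $M\|z_k\|$, so by Bolzano--Weierstrass and a standard diagonal argument one obtains a subsequence $\{x_{n_j}\}$ for which $\la x_{n_j}, z_k\ra$ converges for every $k$. The decisive step is to upgrade convergence on the dense set to convergence on all of $Z$: given $z \in Z$ and $\varepsilon > 0$, choosing $z_k$ with $\|z - z_k\|_Z < \varepsilon$ and using the uniform bound gives $|\la x_{n_j} - x_{n_i}, z\ra| \le |\la x_{n_j} - x_{n_i}, z_k\ra| + 2M\varepsilon$, so $\la x_{n_j}, z\ra$ is Cauchy and hence convergent. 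Defining $L(z) := \lim_{j} \la x_{n_j}, z\ra$ gives a linear functional with $|L(z)| \le M\|z\|$, so $L \in Z^* = X$; writing $L = x_0$ we conclude $x_{n_j} \rightharpoonup^* x_0$.

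The main obstacle is precisely this last upgrade from the dense set to all of $Z$: it is the only place where the uniform boundedness $\|x_n\| \le M$ is indispensable, and it is what converts ``pointwise limits exist on a countable set'' into a genuine bounded linear functional on $Z$. Everything else is routine --- Tychonoff for the topological half, Bolzano--Weierstrass plus diagonalization for the sequential half. I would also note that the two halves are not truly independent: separability of $Z$ makes the weak$^*$ topology on $B_X$ metrizable, for instance via $d(x,x') = \sum_k 2^{-k}\min\bigl(1, |\la x - x', z_k\ra|\bigr)$, so that the topological compactness of the first part already forces sequential compactness and furnishes an alternative route to the second assertion.
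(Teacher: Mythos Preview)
Your argument is correct and follows the standard route: Tychonoff for topological weak$^*$-compactness, and a diagonal extraction plus density argument for the sequential part, with the metrizability remark tying the two together. The paper, however, does not prove this lemma at all; it merely states it as the sequential Banach--Alaoglu theorem and cites \cite[Theorems~3.15 and 3.17]{Rudin91} for the proof, so there is nothing substantive to compare against.
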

The occurring kind of compactness of the level sets with $X=\ell^1$ and predual space $Z=c_0$ ensures the existence of minimizers $\xad$ of the functional \eqref{eq:TIK}.

Throughout this paper, we use the terms `continuous', `compact' or `lower semicontinuous' for an operator, a set or a functional always in the sense of `sequentially continuous', `sequentially compact' or `sequentially lower semicontinuous'. As the Lemmas~6.3 and 6.5 from \cite{FlemmingHabil17} show, there is no reason for a distinction in case of using weak topologies. From Lemma~2.7 and Proposition~2.4 in \cite{BurFleHof13} one can take assertions concerning sufficient conditions for
the weak$^*$-to-weak continuity of $A$, which we summarize in the Proposition~\ref{lem:lemrep} below.
As also indicated in Proposition~\ref{pro:prorep}, for the choice of $\alpha$, the so-called regularization property
\begin{equation} \label{eq:regprop}
\alpha(\delta,y^\delta) \to 0 \qquad \mbox{and} \qquad \frac{\delta^p}{\alpha(\delta,y^\delta)}\to 0 \qquad \mbox{as}\quad \delta \to 0,
\end{equation}
where $\alpha$ tends to zero, but sufficiently slow, plays an important role. In our studies, we consider on the one hand {\it a priori parameter choices} $\alpha_{APRI}=\alpha(\delta)$ defined as
\begin{equation} \label{eq:alphaapriori}
\alpha(\delta):= \frac{\delta^p}{\varphi(\delta)}, \qquad 0 <\delta \le \overline{\delta},
\end{equation}
with {\it concave} index functions $\varphi$. We call $\varphi: [0,\infty) \to [0,\infty)$ an {\it index function} if $\varphi$ with $\varphi(0)=0$ is continuous and strictly increasing. Obviously,
an a priori parameter choice $\alpha_{APRI}$ from \eqref{eq:alphaapriori} with an arbitrary concave index function $\varphi$ satisfies \eqref{eq:regprop} as $\lim _{\delta \to +0} \varphi(\delta)=0$ is valid for each index function and we have $\frac{\delta^p}{\varphi(\delta)}=
\frac{\delta}{\varphi(\delta)}\,\delta^{p-1} \to 0$ as $\delta \to 0$, because $\delta^{p-1}$ is an index function for all exponents $p>1$ in \eqref{eq:TIK} and the factor $\frac{\delta}{\varphi(\delta)}$ is
bounded whenever $\varphi$ is concave.

On the other hand, we consider the {\it sequential discrepancy principle}, comprehensively analyzed in \cite{AnzHofMat14} (see also \cite{HofMat12}), as a specific {\it a posteriori parameter choice} $\alpha_{SDP}
=\alpha(\delta,y^\delta)$ for the regularization parameter. For prescribed $\tau>1,\;0 < q < 1,$ and a sufficiently large value $\alpha_0>0$, we let
  \[ \Delta_q := \{ \alpha_j>0:\, \alpha_j = q^j \alpha_0, \quad j=1,2,... \}. \]
Given $\delta >0$  and $y^\delta \in Y$, we choose $\alpha=\alpha_{SDP} \in\Delta_{q}$ according to the sequential discrepancy principle such that
  \begin{equation}\label{eq:sdp}
    \| A x_{\alpha}^\delta - y^\delta\| \leq \tau \delta < \|A {x_{\alpha/q}^{\delta}} - y^\delta\|.
  \end{equation}
By Theorem~1 in \cite{AnzHofMat14} it has been shown that there is $\overline{\delta}>0$ such that $\alpha_{SDP}$ is well-defined for $0<\delta \le \overline{\delta}$ and satisfies \eqref{eq:regprop} whenever
data compatibility in the sense of \cite[Assumption~3]{AnzHofMat14} takes place.

Consequently, both regularization parameter choices $\alpha=\alpha_{APRI}$ and $\alpha=\alpha_{SDP}$ are applicable for the $\ell^1$-regularization in order to get existence, stability and convergence of regularized
solutions in the sense of Proposition~\ref{pro:prorep}.
Now we are going to discuss conditions under which weak$^*$-to-weak continuity of $A: \ell^1 \to Y$ can be obtained. The occurring cross connections are relevant in order to ensure existence, stability and convergence of regularized solutions, but they have also an essential impact on convergence rates which will be discussed in Section~\ref{sec:4}.

\begin{proposition} \label{lem:lemrep}
Let $A: \ell^1 \to Y$ with adjoint operator $A^*: Y^* \to \ell^\infty$ satisfy the condition
\begin{equation} \label{eq:rangeinc0}
\mathcal{R}(A^*) \subseteq c_0,
\end{equation}
where $c_0$ is the Banach space of real-valued sequences converging to zero equipped with the supremum norm.
Then $A$ is weak$^*$-to-weak continuous.
In particular, \eqref{eq:rangeinc0} is fulfilled whenever there exist, for all $k \in \mathbb{N}$, source elements $f^{(k)} \in Y^*$ such that the system of source conditions
\begin{equation} \label{eq:BFH}
e^{(k)}= A^* f^{(k)}
\end{equation}
holds true, where $\{e^{(k)}\}_{k \in \mathbb{N}}$ is the sequence of $k$-th standard unit vectors which forms a Schauder basis in $c_0$. Under the condition \eqref{eq:BFH}
we even have the equality
\begin{equation} \label{eq:equal}
\overline{\mathcal{R}(A^*)}^{\ell^\infty} = c_0.
\end{equation}
\end{proposition}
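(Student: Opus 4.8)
The plan is to treat the three assertions separately, starting from the characterisation of weak$^*$ convergence in $\ell^1$ as testing against its predual $c_0$.

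\emph{Continuity.} First I would prove that \eqref{eq:rangeinc0} implies weak$^*$-to-weak continuity by a one-line duality computation. Let $x_n \rightharpoonup^* x_0$ in $\ell^1=(c_0)^*$ and fix an arbitrary $g \in Y^*$. By the definition of the adjoint, $\la g, A x_n \ra_{Y^*,Y} = \la A^* g, x_n\ra_{\ell^\infty,\ell^1}$. By \eqref{eq:rangeinc0} the element $A^* g$ lies in $c_0$, which is exactly the predual of $\ell^1$; hence the weak$^*$ convergence $x_n \rightharpoonup^* x_0$ gives $\la A^* g, x_n\ra \to \la A^* g, x_0\ra$, i.e. $\la g, A x_n\ra \to \la g, A x_0\ra$. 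Since $g \in Y^*$ was arbitrary, $A x_n \rightharpoonup A x_0$ in $Y$, as claimed.

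\emph{The equality \eqref{eq:equal}.} For the identity $\overline{\mathcal{R}(A^*)}^{\ell^\infty} = c_0$ I would argue by two inclusions. The inclusion ``$\supseteq$'' uses \eqref{eq:BFH}: each unit vector satisfies $e^{(k)} = A^* f^{(k)} \in \mathcal{R}(A^*)$, and since $\mathcal{R}(A^*)$ is a linear subspace we obtain $\mathrm{span}\{e^{(k)} : k \in \mathbb{N}\} \subseteq \mathcal{R}(A^*)$; passing to $\ell^\infty$-closures and using that $\{e^{(k)}\}$ is a Schauder basis of $c_0$ (so that its closed span is $c_0$) yields $c_0 \subseteq \overline{\mathcal{R}(A^*)}^{\ell^\infty}$. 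The inclusion ``$\subseteq$'' uses \eqref{eq:rangeinc0}: since $c_0$ is a norm-closed subspace of $\ell^\infty$ and $\mathcal{R}(A^*) \subseteq c_0$, also $\overline{\mathcal{R}(A^*)}^{\ell^\infty} \subseteq c_0$. Combining the two inclusions gives \eqref{eq:equal}.

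\emph{The hard part.} The step I expect to be the real obstacle is the assertion that \eqref{eq:BFH} by itself already forces \eqref{eq:rangeinc0}. Writing $a_k := A e^{(k)} \in Y$ for the columns of $A$, one has $(A^* g)_k = \la g, a_k\ra$ for every $g \in Y^*$, so that \eqref{eq:rangeinc0} is equivalent to the weak null convergence $a_k \rightharpoonup 0$ in $Y$. Now \eqref{eq:BFH} only delivers a biorthogonal system, $\la f^{(j)}, a_k\ra = \la A^* f^{(j)}, e^{(k)}\ra = \delta_{jk}$, which controls $\la g, a_k\ra \to 0$ for $g$ in the closed span of $\{f^{(k)}\}$ but not for a general $g \in Y^*$. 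Passing from the test functionals $f^{(k)}$ to all of $Y^*$ therefore seems to require additional input — for instance totality of $\{f^{(k)}\}$ in $Y^*$, or simply retaining \eqref{eq:rangeinc0} (equivalently the weak$^*$-to-weak continuity) as a hypothesis rather than deriving it. This is where I would concentrate the effort, since biorthogonality of the columns alone does not appear to exclude a persistent common weak limit of the $a_k$, and hence does not obviously prevent some $A^* g$ from failing to lie in $c_0$.
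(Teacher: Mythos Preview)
Your arguments for the first claim (weak$^*$-to-weak continuity from \eqref{eq:rangeinc0}) and for the equality \eqref{eq:equal} are correct and are exactly the standard ones; the paper does not give its own proof of Proposition~\ref{lem:lemrep} but simply refers to Lemma~2.7 and Proposition~2.4 of \cite{BurFleHof13}, and immediately afterwards records the equivalence (i)$\Leftrightarrow$(ii)$\Leftrightarrow$(iii) of Proposition~\ref{pro:flid1}, which is precisely your duality computation.

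Your suspicion about the ``hard part'' is not a gap in your reasoning but a gap in the proposition as stated: the implication \eqref{eq:BFH}$\Rightarrow$\eqref{eq:rangeinc0} is \emph{false} without further hypotheses. Your own analysis already points to the counterexample. Take $Y=\ell^2$ with orthonormal basis $(v_j)_{j\ge 0}$ and define $Ae^{(k)}:=v_0+v_k$ for $k\ge 1$, extended linearly; then $A$ is bounded and injective, and $(A^*g)_k=\langle g,v_0\rangle+\langle g,v_k\rangle$. Choosing $f^{(k)}:=v_k$ gives $A^*f^{(k)}=e^{(k)}$, so \eqref{eq:BFH} holds, yet $A^*v_0=(1,1,1,\dots)\notin c_0$, so \eqref{eq:rangeinc0} fails. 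In your language, the biorthogonality $\langle f^{(j)},a_k\rangle=\delta_{jk}$ says nothing about the functional $v_0$, and the columns $a_k=v_0+v_k$ converge weakly to $v_0\neq 0$.

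The correct statement --- which is how the paper actually uses the result later, in Proposition~\ref{prop:weakstarweak} --- is to \emph{assume} weak$^*$-to-weak continuity (equivalently \eqref{eq:rangeinc0}, via Proposition~\ref{pro:flid1}) and then derive \eqref{eq:equal} from \eqref{eq:BFH} or even from the weaker condition $e^{(k)}\in\overline{\mathcal{R}(A^*)}^{\ell^\infty}$. Your proof proposal already arrives at exactly this reading; there is nothing further to prove here, and you should not expect to close the implication you flagged.
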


The paper \cite{AnzHofRam13} shows that the condition \eqref{eq:BFH}, originally introduced by Grasmair in \cite{Grasm09}, can be verified for a wide class of applied linear inverse problems. But as also
the counterexamples in \cite{FleHeg14} indicate, it may fail if the underlying basis smoothness is insufficient.
However, weak$^*$-to-weak continuity of $A$ can be reformulated in several ways as the following proposition, proven in \cite[Lemma~2.1]{Flemming16}, shows. This proposition brings more order into the system of conditions.

\begin{proposition} \label{pro:flid1}
The three assertions
\begin{itemize}
\item[(i)]
\quad$ \{Ae^{(k)}\}_{k\in\mathbb{N}}$ converges in $Y$ weakly to zero, i.e.~$Ae^{(k)} \overset{k \to \infty}{\rightharpoonup} 0$\,,
\item[(ii)]
\quad$\mathcal{R}(A^\ast)\subseteq c_0$\,,
\item[(iii)]
\quad$A$ is weak$^*$-to-weak continuous\,,
\end{itemize}
are equivalent.
\end{proposition}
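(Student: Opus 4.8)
The plan is to establish the cyclic chain (i) $\Rightarrow$ (ii) $\Rightarrow$ (iii) $\Rightarrow$ (i), all three implications resting on one elementary computation together with the identification $(c_0)^\ast=\ell^1$. The crucial identity is that, for any $\eta\in Y^*$, the $k$-th coordinate of $A^*\eta\in\ell^\infty$ equals
$$(A^*\eta)_k=\la A^*\eta,e^{(k)}\ra_{\ell^\infty,\ell^1}=\la \eta, A e^{(k)}\ra_{Y^*,Y},$$
which follows directly from the definition of the adjoint. I would also record at the outset that the weak$^*$ topology on $\ell^1$ is $\sigma(\ell^1,c_0)$, so that $x_n\rightharpoonup^* x_0$ in $\ell^1$ means precisely $\sum_k z_k (x_n)_k\to\sum_k z_k (x_0)_k$ for every $z\in c_0$.

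To prove (i) $\Rightarrow$ (ii), I would fix $\eta\in Y^*$ and use that $Ae^{(k)}\rightharpoonup 0$ says $\la \eta, Ae^{(k)}\ra\to 0$; by the identity above this is exactly $(A^*\eta)_k\to 0$, so $A^*\eta\in c_0$ and hence $\mathcal{R}(A^*)\subseteq c_0$. For (ii) $\Rightarrow$ (iii), I would take $x_n\rightharpoonup^* x_0$ in $\ell^1$, fix $\eta\in Y^*$, write $\la \eta, A x_n\ra=\la A^*\eta, x_n\ra$, and observe that $A^*\eta\in c_0$ by hypothesis; the definition of weak$^*$ convergence, applied to this single fixed test element of the predual, then forces $\la \eta, A x_n\ra\to\la \eta, A x_0\ra$, and since $\eta$ was arbitrary this is exactly $A x_n\rightharpoonup A x_0$. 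For (iii) $\Rightarrow$ (i), I would note that $e^{(k)}\rightharpoonup^* 0$ in $\ell^1$, because $\la z, e^{(k)}\ra=z_k\to 0$ for every $z\in c_0$, so that weak$^*$-to-weak continuity immediately yields $Ae^{(k)}\rightharpoonup 0$.

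No step involves a quantitative estimate, so there is no computational hard part; the one place requiring genuine care is the bookkeeping of which duality pairing is in force, and this is exactly where the non-reflexive setting matters. In (ii) $\Rightarrow$ (iii) and (iii) $\Rightarrow$ (i) one must test against elements of the predual $c_0$ and not against all of $(\ell^1)^*=\ell^\infty$. Indeed, $e^{(k)}$ does \emph{not} converge weakly to zero in $\ell^1$, since by the Schur property weak and norm convergence coincide there while $\|e^{(k)}\|_{\ell^1}=1$; the weak$^*$ character of the convergence is therefore essential and cannot be weakened. Keeping these pairings straight is the only subtlety, and once the coordinate identity is in hand each implication is a one-line verification.
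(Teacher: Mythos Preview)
Your proof is correct and complete. The paper does not actually supply its own proof of this proposition---it cites \cite[Lemma~2.1]{Flemming16}---but your cyclic argument (i)$\Rightarrow$(ii)$\Rightarrow$(iii)$\Rightarrow$(i) via the coordinate identity $(A^\ast\eta)_k=\la\eta,Ae^{(k)}\ra_{Y^\ast\times Y}$ is precisely the method the paper itself employs for the analogous weak$^*$-to-weak$^*$ statement (Proposition~\ref{pro:flid2}), so you are in full alignment with the paper's approach.
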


As outlined in \cite{BurFleHof13}, the operator equation \eqref{eq:opeq} with operator $A: \ell^1 \to Y$ is often motivated by a background operator equation $\tilde A \tilde x =y$ with an injective and bounded linear operator $\tilde A$ mapping from the infinite dimensional Banach space $\tilde X$ with uniformly bounded Schauder basis $\{u^{(k)}\}_{k \in \mathbb{N}}$, i.e.~$\|u^{(k)}\|_{\tilde X} \le K<\infty$, to the Banach space $Y$. Here, following the setting in \cite{Grasm09} we take into account a {\it synthesis operator} $L:\ell^1 \to \tilde X$ defined as $Lx:=\sum \limits_{k=1}^\infty x_k u^{(k)}$ for $x=(x_1,x_2,...) \in \ell^1$, which is a well-defined, injective and
bounded linear operator, and so is the composite operator $A=\tilde A \circ L: \ell^1 \to Y$. In particular $A$ is always weak$^*$-to-weak continuous if $A$ has a bounded extension to $\ell^p$, $1<p<\infty$, as this yields (i) in Proposition \ref{pro:flid1}. Even more specific, $A$ is weak*-to-weak continuous if $\tilde X$ is a Hilbert space. Since this case appears rather often in practice, the continuity property comes ``for free" in this situation.

\section{Ill-posedness and conditional stability} \label{sec:3}

In this section, we discuss ill-posedness phenomena of the operator equation \eqref{eq:opeq} based on Nashed's definition from \cite{Nashed86}, which we formulate in the following as Definition~\ref{def:Nashed} for the simplified case
of an injective bounded linear operator. Moreover, we draw a connecting line to the phenomenon of conditional well-posedness characterized by conditional stability estimates, which yield for appropriate choices of the regularization parameter convergence rates in Tikhonov-type regularization.

\begin{definition} \label{def:Nashed}
The operator equation $Ax=y$ with an injective bounded linear operator $A: X \to Y$ mapping between infinite dimensional Banach spaces $X$ and $Y$ is called {\it well-posed} if the range $\mathcal{R}(A)$ of $A$ is a closed subset of $Y$, otherwise the equation is called {\it ill-posed}. In the ill-posed case, we call the equation {\it ill-posed of type~I} if $\mathcal{R}(A)$ contains an infinite dimensional closed subspace and otherwise
{\it ill-posed of type~II}.
\end{definition}

The following proposition taken from \cite[Proposition~4.2 and 4.4]{FleHofVes15} and the associated Figure~1 gives some more insight into the different situations distinguished in Definition~\ref{def:Nashed}.

\begin{proposition} \label{pro:FHV1}
Consider the operator equation $Ax=y$ from Definition~\ref{def:Nashed}. If this equation is well-posed, i.e., $\mathcal{R}(A)=\overline{\mathcal{R}(A)}^Y$ and there is some constant $\underline c>0$ such that $\|Ax\| \ge \underline{c}\,\|x\| $ for all $x \in X\,$
or the equation is ill-posed of type~I, then the operator $A$ is non-compact. Consequently, compactness of $A$ implies ill-posedness of type~II.
More precisely, for an ill-posed equation $Ax=y$ with injective $A$ and infinite dimensional Banach spaces $X$ and $Y$, ill-posedness of type~II occurs if and only if $A$ is strictly singular. This means that the restriction of $A$ to an infinite dimensional subspace of $X$ is never an isomorphism (linear homeomorphism). If $X$ and $Y$ are both Hilbert spaces and the equation is ill-posed, then ill-posedness of type~II occurs if and only if $A$ is compact.
\end{proposition}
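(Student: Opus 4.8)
The plan is to reduce the entire proposition to one elementary tool: \emph{a compact operator can never be bounded below on an infinite-dimensional subspace}. Indeed, if $M\subseteq X$ is infinite-dimensional and $\|Ax\|\ge c\|x\|$ for all $x\in M$ with some $c>0$, then by Riesz's lemma one can pick a sequence $\{x_n\}$ on the unit sphere of $M$ with $\|x_n-x_m\|\ge 1/2$ for $n\ne m$; the images satisfy $\|Ax_n-Ax_m\|\ge c/2$, so $\{Ax_n\}$ has no Cauchy subsequence and $A$ is not compact. I would record this observation first (equivalently: every compact operator is strictly singular), since all four parts of the statement follow from it together with the bounded inverse theorem.

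With this in hand I would dispatch the two non-compactness claims and the ``consequently'' assertion. In the well-posed case the hypothesis directly gives $\|Ax\|\ge\underline c\|x\|$ on all of $X$, so the observation with $M=X$ yields non-compactness. For the type-I case, let $V\subseteq\mathcal{R}(A)$ be an infinite-dimensional closed subspace and set $W:=A^{-1}(V)$, a closed (hence Banach) subspace of $X$ which is infinite-dimensional because $A$ is injective. Since $V\subseteq\mathcal{R}(A)$, the restriction $A|_W\colon W\to V$ is a continuous bijection between Banach spaces, so by the bounded inverse theorem it is an isomorphism; thus $A$ is bounded below on the infinite-dimensional subspace $W$ and again fails to be compact. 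Part~2 is then the contrapositive combined with the trichotomy of Definition~\ref{def:Nashed}: a compact injective $A$ on infinite-dimensional $X$ cannot have closed range (closed range would force coercivity by the bounded inverse theorem, contradicting compactness) and cannot be type~I by the above, so only type~II remains.

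For the equivalence of type~II and strict singularity under the standing ill-posedness assumption I would argue both directions by contraposition. If $A$ is not strictly singular, there is an infinite-dimensional subspace $M$ on which $A$ is bounded below; passing to $\overline M$ one checks (Cauchy images pull back to Cauchy sequences) that $A(\overline M)$ is closed and infinite-dimensional, so $\mathcal{R}(A)$ contains an infinite-dimensional closed subspace and the equation is type~I, not type~II. Conversely, if the ill-posed equation is not type~II it is type~I, and the isomorphism $A|_W\colon W\to V$ built above exhibits an infinite-dimensional subspace on which $A$ is bounded below, so $A$ is not strictly singular.

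Finally, for the Hilbert space statement the implication ``compact $\Rightarrow$ type~II'' is already part~2, so I only need ``type~II $\Rightarrow$ compact'', which by part~3 reduces to showing that on Hilbert spaces strict singularity forces compactness. Here I would invoke the spectral theorem for the positive self-adjoint operator $|A|=(A^*A)^{1/2}$, using the identity $\|Ax\|=\||A|x\|$. If $A$ is non-compact then $|A|$ is non-compact, so for some $\varepsilon>0$ the spectral projection $\chi_{[\varepsilon,\infty)}(|A|)$ has infinite-dimensional range $M$, on which $\|Ax\|=\||A|x\|\ge\varepsilon\|x\|$; hence $A$ is bounded below on an infinite-dimensional subspace and is not strictly singular. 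I expect this spectral-calculus step to be the main obstacle, as it is precisely where the Hilbert space structure is indispensable: strict singularity and compactness are \emph{not} equivalent for general Banach spaces, so no purely metric argument can substitute for the functional calculus of $A^*A$.
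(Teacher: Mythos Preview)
Your proof is correct and self-contained, but note that the paper does \emph{not} actually prove this proposition: it is quoted without proof from \cite[Propositions~4.2 and~4.4]{FleHofVes15}, so there is no ``paper's own proof'' to compare against. What you have written is a clean independent argument.

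A few remarks on the details. Your core lemma (compact operators are never bounded below on infinite-dimensional subspaces, via Riesz) is exactly the right engine, and the reduction of the type-I case to the bounded inverse theorem on $W=A^{-1}(V)$ is standard and correct. In the strict-singularity equivalence you were right to pass to $\overline{M}$: the usual definition of strict singularity quantifies over \emph{closed} infinite-dimensional subspaces, so the closure step is needed to land back inside that definition and to make $A(\overline{M})$ closed in $Y$. For the Hilbert space step, the spectral argument for $|A|=(A^*A)^{1/2}$ is the canonical route; you might spell out the equivalence ``$A$ compact $\Leftrightarrow |A|$ compact'' via the polar decomposition $A=U|A|$, since that is the one place a reader could stumble.
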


\vspace{-0.4cm}

\begin{figure}[h] \label{fig:FHV1}
\centerline{\input{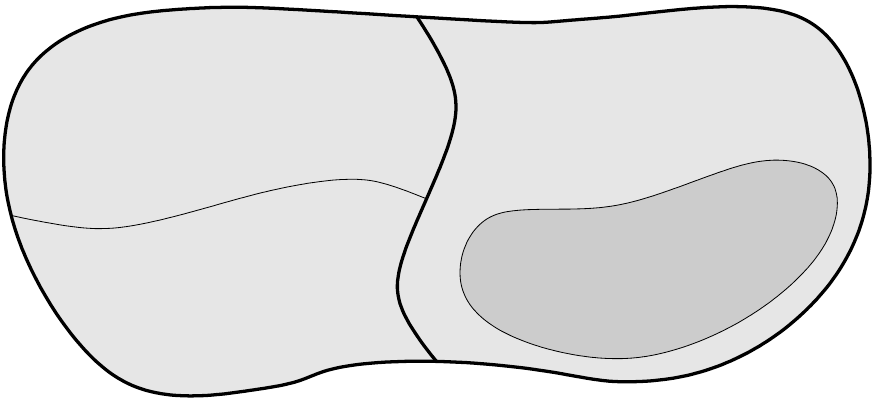_t}}
\caption{Properties of $A$ for well-posedness and ill-posedness types of equations from Definition~\ref{def:Nashed}.}
\end{figure}

Now we apply the case distinction of Definition~\ref{def:Nashed}, verified in detail in Proposition~\ref{pro:FHV1}, to our situation of equation \eqref{eq:opeq} with $X:=\ell^1$ and $A:\ell^1 \to Y$. We start with a general
observation in Proposition~\ref{pro:notI}, which motivates the use of $\ell^1$-regularization for the stable approximate solution of \eqref{eq:opeq}, because the equation is mostly ill-posed. Below we enlighten the cross connections a bit more by the discussion of some example situations.

\begin{proposition} \label{pro:notI}
If $Y$ is a reflexive Banach space, then the operator equation \eqref{eq:opeq} is always ill-posed of type~II.
\end{proposition}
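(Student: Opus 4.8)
The plan is to show that when $Y$ is reflexive, the operator $A:\ell^1\to Y$ cannot be an isomorphism on any infinite-dimensional subspace, so by Proposition~\ref{pro:FHV1} the equation is ill-posed of type~II. The natural strategy rests on the key structural difference between $\ell^1$ and reflexive spaces: $\ell^1$ fails to be reflexive, whereas closed subspaces of a reflexive space are themselves reflexive. Thus if $A$ restricted to some infinite-dimensional closed subspace $V\subseteq\ell^1$ were an isomorphism onto its image $A(V)\subseteq Y$, then $A(V)$ would be a closed infinite-dimensional subspace of the reflexive space $Y$, hence reflexive, and $V\cong A(V)$ would force $V$ to be reflexive too. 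But no infinite-dimensional subspace of $\ell^1$ is reflexive, which yields the contradiction.

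\emph{First I would} establish well-posedness is impossible: well-posedness would require $\mathcal{R}(A)$ closed together with a lower bound $\|Ax\|\ge\underline c\,\|x\|$, i.e. $A$ is an isomorphism onto its (closed) range, which is the infinite-dimensional subspace $\mathcal{R}(A)\subseteq Y$; the reflexivity argument above rules this out. \emph{Next I would} rule out ill-posedness of type~I, which by Definition~\ref{def:Nashed} requires $\mathcal{R}(A)$ to contain an infinite-dimensional closed subspace $W\subseteq Y$. Such $W$ is reflexive, and $A^{-1}(W)$ would contain an infinite-dimensional closed subspace of $\ell^1$ on which $A$ acts as an isomorphism onto $W$ (via the open mapping / bounded below argument, after passing to a suitable subspace), again producing a reflexive infinite-dimensional subspace of $\ell^1$. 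Since these two cases are the only alternatives to type~II in the trichotomy of Definition~\ref{def:Nashed}, eliminating both leaves only ill-posedness of type~II.

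\emph{The cleanest route}, however, is to invoke Proposition~\ref{pro:FHV1} directly: for injective $A$ between infinite-dimensional Banach spaces, ill-posedness of type~II holds if and only if $A$ is strictly singular, meaning $A$ is never an isomorphism when restricted to an infinite-dimensional subspace. So it suffices to prove that every bounded linear $A:\ell^1\to Y$ into a reflexive $Y$ is strictly singular. This is exactly the reflexivity obstruction: an isomorphic copy of an infinite-dimensional $V\subseteq\ell^1$ sitting inside reflexive $Y$ would make $V$ reflexive, contradicting that infinite-dimensional subspaces of $\ell^1$ are non-reflexive (indeed, by a classical result every infinite-dimensional subspace of $\ell^1$ contains a further subspace isomorphic to $\ell^1$ itself).

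\emph{The main obstacle} is supplying the Banach-space fact that no infinite-dimensional subspace of $\ell^1$ is reflexive, and linking it correctly to the ``never an isomorphism'' characterization. One must be careful that strict singularity concerns \emph{all} infinite-dimensional subspaces $V$, not merely complemented or basis-generated ones; the reflexivity argument handles this uniformly, since reflexivity passes to every closed subspace. Assuming the strict-singularity characterization of type~II from Proposition~\ref{pro:FHV1}, the proof reduces to this single topological-vector-space observation, so I expect the argument to be short once the reflexivity hereditary property and the non-reflexivity of $\ell^1$-subspaces are stated.
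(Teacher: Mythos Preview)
Your proposal is correct and follows the same overall strategy as the paper: reduce to showing that $A$ is strictly singular, then invoke Proposition~\ref{pro:FHV1}. The paper's proof is a two-line citation: it appeals to a theorem of Goldberg and Thorp \cite{GoldThorp63} stating that every bounded linear operator from $\ell^1$ into a reflexive Banach space is strictly singular, and then Proposition~\ref{pro:FHV1} rules out well-posedness and type~I. Your version unpacks the content of that cited result via the reflexivity obstruction (closed subspaces of reflexive spaces are reflexive, while no infinite-dimensional closed subspace of $\ell^1$ is reflexive), so your argument is more self-contained but still leans on a classical Banach-space fact of comparable depth. The middle paragraph of your sketch, where you treat well-posedness and type~I separately, is unnecessary once you take the ``cleanest route'' you identify at the end; the strict-singularity argument handles both cases at once, exactly as the paper does.
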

\begin{proof}
As a consequence of the theorem from \cite{GoldThorp63} we have that every bounded linear operator $A: \ell^1 \to Y$  is strictly singular if $Y$ is a reflexive Banach space. Hence well-posedness and ill-posedness of type~I cannot occur in such case.
\end{proof}

\begin{example}{\rm
Consider that, as mentioned before in Section~\ref{sec:2}, we have a composition $A=\tilde A \circ L$ with forward operator $\tilde A: \tilde X \to Y$ for reflexive $Y$ and synthesis operator $L:\ell^1 \to \tilde X$. Then \eqref{eq:opeq} is ill-posed of type~II even if $\tilde A$ is
continuously invertible and hence the equation $\tilde A \tilde x =y$ well-posed. This may occur, for example, for Fredholm or Volterra integral equations of the second kind. Similarly, if $\tilde A$ as mapping between Hilbert spaces is non-compact with non-closed range and hence $\tilde A \tilde x =y$ is ill-posed of type~I (which occurs, e.g., for multiplication operators mapping in $L^2(0,1)$), \eqref{eq:opeq} is still ill-posed of type~II . In the frequent case that $\tilde X$ is a separable Hilbert space and $\{u^{(k)}\}_{k \in \mathbb{N}}$ an orthonormal basis, then $A$ is compact whenever $\tilde A: \tilde X \to Y$ is compact (occurring for example for Fredholm or Volterra integral equations of the first kind).}
\end{example}

\begin{example}{\rm
If $A:=\mathcal{E}_q$ with $1 \le q < \infty$ and $Y:=\ell^q$ is the embedding operator, then solving equation \eqref{eq:opeq} based on noisy data $y^\delta \in \ell^q$ fulfilling \eqref{eq:noise} is a {\it denoising problem} (see also \cite[Sect.~5]{FleHofVes15} and \cite[Example~6.1]{FleHofVes16}). For $1<q < \infty$ the embedding operator $A=\mathcal{E}_q$  is strictly singular with non-closed range but non-compact.
     Due to Proposition~\ref{pro:notI} the equation is ill-posed of type~II. Moreover, we have $Ae^{(k)} \rightharpoonup 0$ in $\ell^q$, which due to Proposition~\ref{pro:flid1} implies that $A$ is weak$^*$-to-weak continuous and hence $\mathcal{R}(A^*) \subseteq c_0$. The latter is obvious, because the adjoint $A^*$ is the embedding operator from $\ell^{q^*}$ to $\ell^\infty$ with $1/q+1/q ^*=1$ and $\mathcal{R}(A^*)=\ell^{q^*}$. In particular, the source condition \eqref{eq:BFH}  applies with $f^{(k)}=e^{(k)} \in \ell^{q^*} \subset c_0$ for all $k \in \mathbb{N}$.}
\end{example}

\begin{example}{\rm
For $q=1$ in the previous example we have the continuously invertible identity operator $A=Id: \ell^1 \to \ell^1$ with closed range $\mathcal{R}(A)=\ell^1$. Then equation \eqref{eq:opeq} is {\it well-posed}, but we have $Ae^{(k)} \not \rightharpoonup 0$ in $\ell^1$ for $k \to \infty$, which due to Proposition~\ref{pro:flid1} indicates that the range $\mathcal{R}(A^*)$ of the adjoint of $A$ does not belong to $c_0$ and in particular $A$ is not weak$^*$-to-weak continuous. This is evident, because the adjoint of $A=Id$ is the identity $A^*=Id: \ell^\infty \to \ell^\infty$ and  $\mathcal{R}(A^*)=\ell^\infty$. We will come back to this example later.}
\end{example}

For obtaining error estimates in $\ell^1$-regularization on which convergence rates are based, we need some kind of conditional well-posedness in order to overcome the ill-posedness of equation \eqref{eq:opeq}.
Well-posed varieties of equation \eqref{eq:opeq} yield stability estimates $\|x-x^\dagger\|_{\ell^1} \le K \|Ax-Ax^\dagger\|_Y$ for all $x \in \ell^1$, which under \eqref{eq:noise} and for the choice $\alpha=\alpha_{SDP}$ imply the best possible rate
\begin{equation} \label{eq:correctrate}
\|\xad-\xdag\|_{\ell^1} = O(\delta) \quad \mbox{as} \quad \delta \to 0\,,
\end{equation}
which is typical for well-posed situations. We will come back to this in Section \ref{sec:6}.
We say that a {\it conditional stability estimate} holds true if there is a subset $\mathcal{M} \subset \ell^1$ such that
\begin{equation} \label{eq:CSE}
\|x-x^\dagger\|_{\ell^1} \le K(\mathcal{M}) \|Ax-Ax^\dagger\|_Y \quad \mbox{for all} \quad x \in \mathcal{M}\,.
\end{equation}
Because $\mathcal{M}$ is not known a priori, such kind of stability requires the additional use of regularization for bringing the approximate solutions to $\mathcal{M}$ such that a rate \eqref{eq:correctrate}
can be verified. This idea was first published in \cite{ChengYam00}
by Cheng and Yamamoto. In the context of $\ell^1$-regularization for our equation \eqref{eq:opeq}, we have estimates of the form \eqref{eq:CSE} if the solution $\xdag \in \ell^0$ is {\it sparse}, i.e.~only a finite
number of non-zero components occur in the infinite sequence $\xdag$. Then $\mathcal{M}$ can be considered as a subset of $\ell^0$ with specific properties, and the sparsity of $\ell^1$-regularized solutions
verified in Proposition~\ref{pro:prorep} ensures that the corresponding approximate solutions belong to $\mathcal{M}$. This implies the rate  \eqref{eq:correctrate} for $\xdag \in \ell^0$, although equation \eqref{eq:opeq} is not well-posed.

A similar but different kind of conditional well-posedness estimates are {\it variational source conditions}, which attain in our setting the form
\begin{equation} \label{eq:VI}
\beta\,\|x-x^\dagger\|_{\ell^1} \le \|x\|_{\ell^1}- \|\xdag\|_{\ell^1}+\varphi(\|Ax-Ax^\dagger\|_Y) \quad \mbox{for all} \quad x \in \ell^1\,,
\end{equation}
satisfied for a constant $0<\beta \le 1$ and some concave index function $\varphi$. From \cite[Theorems~1 and 2]{HofMat12} we find directly the convergence rates results of the subsequent proposition.
\begin{proposition} \label{pro:l1rates}
If the variational source condition \eqref{eq:VI} holds true for a constant $0<\beta \le 1$ and some concave index function $\varphi$, then we have for $\ell^1$-regularized solutions $\xad$ the convergence rate
\begin{equation} \label{eq:phirate}
\|\xad-\xdag\|_{\ell^1} = O(\varphi(\delta)) \quad \mbox{as} \quad \delta \to 0
\end{equation}
whenever the regularization parameter is chosen either a priori $\alpha=\alpha_{APRI}$ according to \eqref{eq:alphaapriori} or a posteriori as $\alpha=\alpha_{SDP}$ according to \eqref{eq:sdp}.
\end{proposition}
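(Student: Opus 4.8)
The plan is to reduce the statement to the general convergence-rate machinery for variational source conditions; concretely, I would re-derive the two cited results of \cite{HofMat12} by combining the minimizing property of $\xad$ with the inequality \eqref{eq:VI}. First I would exploit that $\xad$ minimizes the Tikhonov functional in \eqref{eq:TIK}: testing with the exact solution $\xdag$ and using $A\xdag=y$ together with \eqref{eq:noise} gives $\tfrac1p\|A\xad-y^\delta\|_Y^p+\alpha\|\xad\|_{\ell^1}\le\tfrac1p\delta^p+\alpha\|\xdag\|_{\ell^1}$, hence $\alpha(\|\xad\|_{\ell^1}-\|\xdag\|_{\ell^1})\le\tfrac1p(\delta^p-s^p)$ with the abbreviation $s:=\|A\xad-y^\delta\|_Y$. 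Inserting the lower bound for $\|\xad\|_{\ell^1}-\|\xdag\|_{\ell^1}$ supplied by \eqref{eq:VI} and estimating $\|A\xad-A\xdag\|_Y\le s+\delta$ by the triangle inequality, I obtain the master inequality
\begin{equation*}
\beta\,\|\xad-\xdag\|_{\ell^1}\le\frac{\delta^p-s^p}{p\,\alpha}+\varphi(s+\delta),
\end{equation*}
so that the whole analysis reduces to estimating its right-hand side for the two admissible parameter choices. Throughout I would use that a concave index function with $\varphi(0)=0$ is subadditive, satisfies $\varphi(\lambda t)\le\lambda\varphi(t)$ for $\lambda\ge1$, and that $t\mapsto t^p/\varphi(t)$ is nondecreasing (since $p>1$ and $t/\varphi(t)$ is nondecreasing).

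For the a priori choice $\alpha=\alpha_{APRI}$ from \eqref{eq:alphaapriori} I would substitute $\tfrac1\alpha=\varphi(\delta)/\delta^p$ into the master inequality and apply subadditivity, $\varphi(s+\delta)\le\varphi(s)+\varphi(\delta)$. The only term not manifestly of order $\varphi(\delta)$ is then $\varphi(s)-\tfrac{\varphi(\delta)}{p}(s/\delta)^p$, which I would control by a Young-type argument: writing $u=s/\delta$, the elementary bound $u-u^p/p\le1-\tfrac1p$ (with the two regimes $u\le1$ and $u>1$ treated by monotonicity of $u-u^p/p$) shows this term never exceeds $\varphi(\delta)$. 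Collecting the contributions yields $\|\xad-\xdag\|_{\ell^1}\le\tfrac1\beta\bigl(2+\tfrac1p\bigr)\varphi(\delta)$, i.e.\ the rate \eqref{eq:phirate}.

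For the a posteriori choice $\alpha=\alpha_{SDP}$ the residual is controlled directly by the upper inequality in \eqref{eq:sdp}, namely $s\le\tau\delta$, so that $\varphi(s+\delta)\le\varphi((\tau+1)\delta)\le(\tau+1)\varphi(\delta)$ disposes of the second term at once. The remaining, and I expect hardest, step is to show that $\tfrac{\delta^p-s^p}{p\alpha}\le\tfrac{\delta^p}{p\alpha}$ is also $O(\varphi(\delta))$, which amounts to the lower bound $\alpha_{SDP}\ge c\,\delta^p/\varphi(\delta)$. To get it I would apply the master inequality to the minimizer $x_{\alpha/q}^\delta$ at the neighbouring parameter $\alpha/q$, whose residual $\tilde s:=\|A x_{\alpha/q}^\delta-y^\delta\|_Y$ exceeds $\tau\delta$ by the lower inequality in \eqref{eq:sdp}. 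Since the left-hand side is nonnegative, this forces $\tfrac{\tilde s^p-\delta^p}{p(\alpha/q)}\le\varphi(\tilde s+\delta)$. Using $\tilde s>\tau\delta>\delta$ one has $\tilde s^p-\delta^p\ge\tilde s^p(1-\tau^{-p})$ and $\varphi(\tilde s+\delta)\le2\varphi(\tilde s)$, whence the monotonicity of $t^p/\varphi(t)$ gives $\alpha/q\ge\tfrac{1-\tau^{-p}}{2p}\cdot\tfrac{\tilde s^p}{\varphi(\tilde s)}\ge\tfrac{1-\tau^{-p}}{2p}\cdot\tfrac{(\tau\delta)^p}{\varphi(\tau\delta)}\ge c\,\delta^p/\varphi(\delta)$ with a constant $c$ depending only on $\tau,q,p$. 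Feeding this back into the master inequality again yields $\|\xad-\xdag\|_{\ell^1}=O(\varphi(\delta))$.

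The main obstacle is thus confined to the a posteriori case: the upper discrepancy bound alone yields only $\alpha_{SDP}\gtrsim\delta^p$, which is too weak, and extracting the sharper $\alpha_{SDP}\gtrsim\delta^p/\varphi(\delta)$ requires combining the lower discrepancy inequality in \eqref{eq:sdp} with the source condition \eqref{eq:VI} and the concavity structure of $\varphi$. The well-definedness of $\alpha_{SDP}$ and the regularization property \eqref{eq:regprop} that the argument presupposes are already guaranteed by Theorem~1 in \cite{AnzHofMat14}, as recalled above.
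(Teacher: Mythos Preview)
Your proposal is correct and follows precisely the standard route that underlies the cited results: the paper itself does not give a proof at all but simply invokes \cite[Theorems~1 and 2]{HofMat12}, so what you have written is a faithful, self-contained reproduction of exactly that machinery (master inequality from minimality plus \eqref{eq:VI}, Young-type estimate for $\alpha_{APRI}$, and the two-sided discrepancy bounds together with monotonicity of $t^p/\varphi(t)$ to secure the lower bound $\alpha_{SDP}\gtrsim\delta^p/\varphi(\delta)$). The only cosmetic point worth tightening is the handling of the term $\varphi(s)-\tfrac{\varphi(\delta)}{p}(s/\delta)^p$ in the a~priori case: for $s\le\delta$ the bound $\varphi(s)\le u\,\varphi(\delta)$ fails (concavity gives the reverse), so you should explicitly use monotonicity $\varphi(s)\le\varphi(\delta)$ there and reserve the Young bound $u-u^p/p\le 1-1/p$ for the regime $s>\delta$; your parenthetical indicates you are aware of this split, but stating it plainly would remove any ambiguity.
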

Consequently, for the manifestation of convergence rates results in the next section it remains to find constants $\beta$, concave index functions $\varphi$ and sufficient conditions for the verification of corresponding variational inequalities \eqref{eq:VI}.

\section{Convergence rates for $\ell^1$-regularization} \label{sec:4}

The first step to derive a variational source condition \eqref{eq:VI} at the solution point $\xdag=(\xdag_1,\xdag_2,...) \in \ell^1$ was taken by Lemma~5.1 in \cite{BurFleHof13}, where the inequality
\begin{equation} \label{eq:firstlemma}
\|x-x^\dagger\|_{\ell^1} \le \|x\|_{\ell^1}- \|\xdag\|_{\ell^1}+2\,\left(\sum \limits_{k=n+1}^\infty |x^\dagger_k|+\sum \limits_{k=1}^n |x_k-x^\dagger_k| \right)
\end{equation}
was proven for all $x=(x_1,x_2,...) \in \ell^1$ and all $n\in\mathbb{N}$. Then under the source condition \eqref{eq:BFH} ,valid for all $k \in \mathbb{N}$, one directly finds
\begin{equation}\label{eq:finitedim_part}
\sum \limits_{k=1}^n |x_k-x^\dagger_k| =\sum \limits_{k=1}^n \langle e^{(k)},x-\xdag\rangle_{\ell^\infty \times \ell^1}=\sum \limits_{k=1}^n\langle f^{(k)},A(x-\xdag)\rangle_{Y^* \times Y}
\end{equation}
and hence from \eqref{eq:firstlemma} that a function of type
\begin{equation} \label{eq:generalphi}
\varphi(t)=2\,\inf \limits_{n \in \mathbb{N}} \left(\sum \limits_{k=n+1}^\infty |x^\dagger_k|+\gamma_n\,t \right)
\end{equation}
with $\beta=1$ and
\begin{equation} \label{eq:gamma1}
\gamma_n= \sum \limits_{k=1}^n \|f^{(k)}\|_{Y^*}
\end{equation}
provides us with a variational inequality \eqref{eq:VI}. Along the lines of the proof of \cite[Theorem~5.2]{BurFleHof13} one can show the assertion of the following lemma.

\begin{lemma} \label{lem:concave}
If $\{\gamma_n\}_{n \in \mathbb{N}}$ is a non-decreasing sequence, then $\varphi$ from \eqref{eq:generalphi} is a well-defined and concave index function for all $\xdag \in \ell^1$.
\end{lemma}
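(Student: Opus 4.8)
The plan is to read \eqref{eq:generalphi} as a lower envelope of affine functions and to harvest well-definedness, concavity, continuity and monotonicity from that structure, invoking the hypothesis only at the decisive step. First I would abbreviate the tails of the solution by $r_n := \sum_{k=n+1}^\infty |\xdag_k|$, so that $\varphi(t) = 2\,\inf_{n \in \mathbb{N}} \ell_n(t)$ with the affine pieces $\ell_n(t) := r_n + \gamma_n t$. Since $\xdag \in \ell^1$, the numbers $r_n$ are finite, non-negative and satisfy $r_n \downarrow 0$ as $n \to \infty$. Each $\ell_n$ is affine and non-negative on $[0,\infty)$, so $0 \le \varphi(t) \le 2\,\ell_1(t) < \infty$ for every $t \ge 0$, which shows that $\varphi$ is well-defined as a map $[0,\infty) \to [0,\infty)$. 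Being a pointwise infimum of affine (in particular concave) functions, $\varphi$ is concave on $[0,\infty)$, so the concavity claim comes for free.

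It then remains to check that $\varphi$ is an index function, that is, $\varphi(0)=0$, $\varphi$ is continuous, and $\varphi$ is strictly increasing. The value at the origin is immediate, $\varphi(0) = 2\inf_n r_n = 0$, because $r_n \downarrow 0$. Continuity on the open half-line $(0,\infty)$ is automatic for a finite concave function, so the only point needing care is right-continuity at $0$. Here I would use the squeeze $0 \le \varphi(t) \le 2r_n + 2\gamma_n t$, valid for each fixed $n$: given $\varepsilon>0$ one first picks $n$ so large that $2r_n < \varepsilon/2$, and then lets $t \to 0^+$ to make $2\gamma_n t < \varepsilon/2$, whence $\limsup_{t\to 0^+}\varphi(t) \le \varepsilon$; since $\varepsilon$ was arbitrary and $\varphi \ge 0$, this gives $\lim_{t\to 0^+}\varphi(t)=0=\varphi(0)$.

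The heart of the argument, and the step where the non-decreasing hypothesis is genuinely used, is strict monotonicity. The key observation is that every slope is bounded below by $\gamma_1$, which in the present setting equals $\|f^{(1)}\|_{Y^*}$ and is strictly positive because $f^{(1)} \neq 0$ (otherwise $e^{(1)} = A^* f^{(1)} = 0$ by \eqref{eq:BFH}). For $t > s \ge 0$ I would then write, for each $n$,
\[
r_n + \gamma_n t = (r_n + \gamma_n s) + \gamma_n (t-s) \ge (r_n + \gamma_n s) + \gamma_1 (t-s),
\]
using $\gamma_n \ge \gamma_1$ together with $t-s>0$, and, since the additive term $\gamma_1(t-s)$ is independent of $n$, take the infimum over $n$ to conclude
\[
\varphi(t) \ge \varphi(s) + 2\gamma_1 (t-s) > \varphi(s).
\]
Hence $\varphi$ strictly increases, which finishes the proof that it is a concave index function.

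I expect the genuine obstacle to be exactly this last point. The monotonicity of $\{\gamma_n\}$ by itself guarantees only that $\varphi$ is \emph{non-decreasing}; without the strict positivity $\gamma_1>0$ the lower envelope may level off. Indeed, if the first slopes vanished, the flat pieces $\ell_n \equiv r_n$ would eventually win the infimum and force $\varphi$ to be constant for large $t$ whenever $\xdag$ is not finitely supported, destroying the index-function property. Recording that $\gamma_1 > 0$ in our situation, and tying it to $f^{(1)}\neq 0$, is therefore the conceptual crux, while well-definedness, concavity and continuity are routine consequences of the infimum-of-affine-functions representation.
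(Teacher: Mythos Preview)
Your argument is correct and complete. The paper itself does not give a proof of this lemma; it merely says ``along the lines of the proof of \cite[Theorem~5.2]{BurFleHof13} one can show the assertion,'' so there is nothing to compare your route against beyond noting that the infimum-of-affine-functions viewpoint you take is exactly the one underlying that reference.

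You are also right to flag the one nontrivial point: strict monotonicity genuinely requires $\gamma_1>0$, which the lemma does not state but which is automatic in both settings where the paper applies it. For $\gamma_n$ from \eqref{eq:gamma1} this follows, as you say, from $A^\ast f^{(1)}=e^{(1)}\neq 0$; for $\gamma_n$ coming from Property~\ref{property_ill_posedness} it follows because $\xi=e^{(1)}$ forces $\eta\neq 0$ in item~(a). Everything else---finiteness, concavity, the value at zero, interior continuity, and right-continuity at zero---is handled cleanly by your lower-envelope argument.
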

Both the decay rate of $\xdag_k \to 0$ as $k \to \infty$ and the behaviour of $\gamma_n$ as $n \to \infty$ in \eqref{eq:generalphi} have impact on the resulting rate function $\varphi$. A power-type decay of $\xdag_k$ leads to
H\"older convergence rates (see \cite[Example~5.3]{BurFleHof13} and \cite[Example~3.4]{FleHofVes15}), whereas exponential decay of $\xdag_k$ leads to near-to-$\delta$ rates slowed down by a logarithmic factor (see \cite[Example~3.5]{BoHo13} and \cite[Example~3.5]{FleHofVes15}). In the case that $\xdag \in \ell^0$ is sparse with $\xdag_k=0$ for all $k>n_0$, then the best possible rate \eqref{eq:correctrate} is seen. This becomes clear from formula \eqref{eq:generalphi}, because then $\varphi$ fulfills the inequality $\varphi(t) \le 2 \gamma_{n_0} \,t$.

From Proposition~\ref{pro:l1rates} we have that for all concave index functions $\varphi$ from \eqref{eq:generalphi} a convergence rate \eqref{eq:phirate} for the $\ell^1$-regularization takes place in the case of appropriate choices of the regularization parameter $\alpha$  whenever a constant $0<\beta \le 1$
exists such that \eqref{eq:VI} is valid with $\varphi$ from \eqref{eq:generalphi}. When the condition \eqref{eq:BFH} is valid, this is the case with $\beta=1$ and $\gamma_n$ from \eqref{eq:gamma1}. Under the same condition the rate
was slightly improved in \cite{FleHofVes15} (see also \cite{FleHofVes16}) by showing that $\gamma_n$ from \eqref{eq:gamma1} can be replaced with
\begin{equation} \label{eq:gamma2}
\gamma_n=  \sup_{\substack{a_k\in\{-1,0,1\}\\k=1,\ldots,n}}\left\Vert\sum_{k=1}^n a_k f^{(k)}\right\Vert_{Y^*}.
\end{equation}

However, the condition \eqref{eq:BFH} may fail as was noticed first in \cite{FleHeg14} for a bidiagonal operator. Therefore, assumption \eqref{eq:BFH} was replaced by a weaker (but not particularly eye-pleasing) one in \cite{FleHeg14}.
Ibid the authors assume, in principle, that for each $n\in \mathbb{N}$ there are elements $f^{(n,k)}$ such that for all $1\leq i\leq n$
\[
[A^\ast f^{(n,k)}]_i =[e^{(k)}]_i
\]
and
\[
\left|\sum\limits_{k=1}^n [A^\ast f^{(n,k)}]_i\right|\leq c \quad \mbox{for all} \quad i>n \quad\mbox{and} \quad c<1.
\]
This means that each basis vector $e^{(k)}$ can be approximated exactly up to arbitrary position but with a non-zero tail consisting of sufficiently small elements. Later, in \cite{FleHofVes16}, a more clearly formulated property was assumed which implies the one from \cite{FleHeg14}. We give a slightly reformulated version of this property in the following. In this context, we notice that $P_n$ denotes the projection operator applied to elements $x=(x_1,x_2,...,x_n,x_{n+1},...)$ such that
$P_n x=(x_1,x_2,...,x_n,0,0,...)$.
\begin{property}\label{property_ill_posedness}
For arbitrary $\mu\in[0,1)$, we have a real sequence $\{\gamma_n\}_{n\in\mathbb{N}}$ such that for each $n\in\mathbb{N}$ and each $\xi=\xi(n)\in\ell^\infty$, with
\begin{equation}\label{eq:xi}
\xi_k\begin{cases}\in[-1,1],&\text{if }k\leq n,\\=0,&\text{if }k>n\end{cases}\,,
\end{equation}
there exists some $\eta=\eta(\mu,n,\xi) \in Y^\ast$ satisfying
\begin{itemize}
\item[(a)]\quad
$P_nA^\ast\eta=\xi$,
\item[(b)]\quad
$\vert[(I-P_n)A^\ast\eta]_k\vert\leq\mu$\quad for all $\;k>n$,
\item[(c)]\quad
$\|\eta\|_{Y^\ast}\leq\gamma_n$.
\end{itemize}
\end{property}

It is important to note that it was a substantial breakthrough in the recent paper \cite{FleGer17} to show that Property \ref{property_ill_posedness} follows directly from injectivity and weak$^*$-to-weak continuity of the operator $A$. Namely, the following proposition was proven there. Note that we changed the definition of the $\xi$ in \eqref{eq:xi} slightly. By checking the proofs in the original paper one sees the amendments we made are not relevant.
\begin{proposition}\label{prop:weakstarweak}
Let $A:\ell^1\rightarrow Y$ be bounded, linear and weak$^\ast$-to-weak continuous. Then the following assertions are equivalent.
\begin{itemize}
\item[(i)]\quad
$A$ is injective,
\item[(ii)]\quad
$e^{(k)}\in\overline{\mathcal{R}(A^\ast)}^{\ell^\infty}\;$ for all $\;\;k\in\mathbb{N}$,
\item[(iii)]\quad
$\overline{\mathcal{R}(A^\ast)}^{\ell^\infty}=c_0$,
\item[(iv)]\quad
Property \ref{property_ill_posedness} holds.
\end{itemize}
\end{proposition}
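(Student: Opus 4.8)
The plan is to handle the three ``static'' assertions (i)--(iii) by duality and then close the cycle through the constructive equivalence with (iv). Throughout one uses that weak$^*$-to-weak continuity already forces $\mathcal{R}(A^*)\subseteq c_0$ (Proposition~\ref{pro:flid1}), so that $M:=\overline{\mathcal{R}(A^*)}^{\ell^\infty}$ is a \emph{closed linear subspace of} $c_0$. I would first establish (i)$\Leftrightarrow$(iii), then (ii)$\Leftrightarrow$(iii), and finally the chain (iii)$\Rightarrow$(iv)$\Rightarrow$(ii), which makes all four assertions equivalent.

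For (i)$\Leftrightarrow$(iii) I would invoke the elementary kernel--range duality. Writing the pairing $\la A^*\eta,x\ra_{\ell^\infty\times\ell^1}=\la \eta,Ax\ra_{Y^*\times Y}$, one sees that $x\in\ell^1$ annihilates $\mathcal{R}(A^*)$, equivalently $M$ by continuity of the pairing, if and only if $Ax=0$. Hence the annihilator $M^\perp$ taken in the predual duality $\ell^1=(c_0)^*$ equals $\mathcal{N}(A)$. Since for a closed subspace $M\subseteq c_0$ the Hahn--Banach theorem gives $M=c_0$ exactly when no nonzero functional in $(c_0)^*=\ell^1$ annihilates $M$, we obtain that $A$ is injective $\Leftrightarrow M^\perp=\{0\}\Leftrightarrow M=c_0$. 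The equivalence (ii)$\Leftrightarrow$(iii) is then immediate from the fact that $\{e^{(k)}\}$ is a Schauder basis of $c_0$: the closed subspace $M$ contains every $e^{(k)}$ iff it contains their closed span $c_0$, i.e.\ iff $M=c_0$.

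The implication (iv)$\Rightarrow$(ii) is short: applying Property~\ref{property_ill_posedness} with $\xi=e^{(k)}$ and $n=k$ produces, for every $\mu>0$, some $\eta\in Y^*$ with $P_kA^*\eta=e^{(k)}$ and tail bounded by $\mu$, so that $\|A^*\eta-e^{(k)}\|_{\ell^\infty}\le\mu$; letting $\mu\downarrow0$ yields $e^{(k)}\in M$. The substantial direction is (iii)$\Rightarrow$(iv), and here I expect the main difficulty. Injectivity makes $Ae^{(1)},\dots,Ae^{(n)}$ linearly independent, so the finite-rank map $\eta\mapsto P_nA^*\eta$ is onto $\mathbb{R}^n$ and, by the open mapping theorem, admits a bounded right inverse; this alone, however, only realizes $\xi$ \emph{exactly on the first $n$ coordinates with controlled norm but an uncontrolled tail}, whereas pure density of $\mathcal{R}(A^*)$ in $c_0$ gives a \emph{small tail but no norm control}. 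Reconciling the tail bound (b) with the uniform norm bound (c) while keeping (a) exact is the crux.

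My plan for (iii)$\Rightarrow$(iv) is therefore a two-step ``quantitative density'' argument. First I would prove a uniform approximate-realization lemma: for fixed $n$ and $\theta\in(0,1)$ there is $C_n<\infty$ such that every target $\zeta$ supported in $\{1,\dots,n\}$ admits $\eta$ with $\|A^*\eta-\zeta\|_{\ell^\infty}\le\theta\|\zeta\|_{\ell^\infty}$ and $\|\eta\|_{Y^*}\le C_n\|\zeta\|_{\ell^\infty}$. By homogeneity it suffices to treat the compact unit sphere of the $n$-dimensional coordinate block: density furnishes for each such $\zeta$ one $\eta_\zeta$ with $\|A^*\eta_\zeta-\zeta\|_{\ell^\infty}\le\theta$, continuity lets the same $\eta_\zeta$ serve a whole neighborhood, and a finite subcover yields the uniform bound $C_n$; note that $\|A^*\eta-\zeta\|_{\ell^\infty}\le\theta\|\zeta\|$ already forces the tail $\|(I-P_n)A^*\eta\|_{\ell^\infty}\le\theta\|\zeta\|$ since $\zeta$ vanishes beyond $n$. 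Second, given a box target $\xi$ with $\|\xi\|_{\ell^\infty}\le1$, I would iterate the lemma on the residuals $r_0:=\xi$, $r_j:=r_{j-1}-P_nA^*\eta_j$: choosing $\theta=\mu/2$ gives $\|r_j\|_{\ell^\infty}\le\theta^{\,j}$ and tail contributions $\le\theta^{\,j}$, so that $\eta:=\sum_j\eta_j$ converges in $Y^*$, satisfies $P_nA^*\eta=\xi$ exactly, has total tail $\le\theta/(1-\theta)\le\mu$, and norm $\le C_n/(1-\theta)=:\gamma_n$. This delivers Property~\ref{property_ill_posedness} with a sequence $\{\gamma_n\}$ independent of $\xi$ and closes the cycle. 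The genuinely delicate point is precisely making the geometric decay and the norm control coexist; the endpoint $\mu=0$, which would force the stronger source condition \eqref{eq:BFH}, is naturally excluded.
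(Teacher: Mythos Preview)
Your argument is correct, and the overall logical organization—settling (i)$\Leftrightarrow$(iii) and (ii)$\Leftrightarrow$(iii) by duality and Schauder-basis arguments, (iv)$\Rightarrow$(ii) directly, and then concentrating on (iii)$\Rightarrow$(iv)—is perfectly sound. The paper itself does not spell out a proof of this proposition but refers to \cite{FleGer17} and to the parallel proof of Proposition~\ref{prop:weakstarweakstar}, so the relevant comparison is with that argument.

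The substantive difference lies in how the hard implication (i)/(iii)$\Rightarrow$(iv) is obtained. The paper (via Proposition~\ref{th:ra}) first proves, by induction on $n$ using a convex-combination trick, that any $\xi\in c_0$ can be realized \emph{exactly} on the first $n$ coordinates by some element of $\mathcal{R}(A^*)$ with an $\varepsilon$-small tail—but with \emph{no} a priori control on $\|\eta\|_{Y^*}$. The norm bound $\gamma_n$ is then obtained a posteriori by applying this to each $e^{(k)}$ with tail tolerance $\mu/n$ and setting $\gamma_n=\sum_{k=1}^n\|\eta_k\|_{Y^*}$, since any $\xi$ in the box is a $[-1,1]$-combination of the $e^{(k)}$. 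Your route is genuinely different: you exploit compactness of the unit sphere in the $n$-dimensional coordinate block to get a \emph{uniform} $\theta$-approximation with norm control $C_n$, and then upgrade to exact matching on the first $n$ coordinates by a geometric (Neumann-type) iteration on the residual, controlling tail and norm simultaneously. Both arguments are valid; the paper's convex-combination induction delivers exactness in one shot but defers the norm bound to a basis decomposition, whereas your compactness-plus-iteration scheme keeps the norm bound uniform throughout and is closer in spirit to standard successive-approximation arguments. Your remark that the endpoint $\mu=0$ is naturally excluded matches the paper, whose proof of Proposition~\ref{prop:weakstarweakstar} likewise fixes $\mu\in(0,1)$.
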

In other words, for such operators there exist appropriate sequences $\{\gamma_n\}_{n \in \mathbb{N}}$ occurring in \eqref{eq:generalphi} such that a variational source condition~\eqref{eq:VI} holds for an index function $\varphi$ from \eqref{eq:generalphi} and constant $\beta=\frac{1-\mu}{1+\mu}$ (see Proposition~\ref{thm:rates} below). Item (b) in Property~\ref{property_ill_posedness} is a generalization of \eqref{eq:BFH}. Namely, the canonical basis vectors $e^{(k)}$ do not necessarily belong to the range of $A^\ast$ but to its closure. For the proof of Proposition~\ref{prop:weakstarweak} we refer to \cite{FleGer17}. Most of the steps are identical or at least similar to the proof of
Proposition~\ref{prop:weakstarweakstar} which we will give later.

\section{Non-reflexive image spaces} \label{sec:5}
If the injective bounded linear operator $A:\ell^1 \to Y$ fails to be weak$^*$-to-weak continuous, then the results of the preceding section do not apply. In case that $Y$ is a non-reflexive Banach space, it makes sense to consider the
weaker property of weak$^*$-to-weak$^*$ continuity of $A$. An already mentioned example is the identity mapping $A=Id$ for $Y=\ell^1$. In $\ell^1$, weak convergence and norm convergence coincide (Schur property), but there is no coincidence with  weak$^*$ convergence. Thus, the identity mapping cannot be weak$^*$-to-weak continuous, but it is weak$^*$-to-weak$^*$ continuous as the following Proposition~\ref{pro:flid2} shows. It is a modified extension of Proposition~\ref{pro:flid1}.
Following \cite[Lemma~6.5]{FlemmingHabil17} we formulate this extension and repeat below the relevant proof details.

\begin{proposition} \label{pro:flid2}
Let $Z$ be a separable Banach space which acts as a predual space for the Banach space $Y=Z^\ast$.
Then the following four assertions are equivalent.
\begin{itemize}
\item[(i)]
\quad$\{Ae^{(k)}\}_{k\in\mathbb{N}}$ converges in $Y$ weakly$^*$ to zero\,,
\item[(ii)]
\quad$\mathcal{R}(A^\ast\vert_Z):=\{v \in \ell^\infty:\, v=A^*z \;\;\;\mbox{for some}\;\;\; z \in Z \subseteq Y^*\}\subseteq c_0$\,,
\item[(iii)]
\quad$A$ is weak$^*$-to-weak$^*$ continuous\,,
\item[(iv)]
\quad There is a bounded linear operator $S: Z\rightarrow c_0$ such that $A=S^\ast$\,.
\end{itemize}

\end{proposition}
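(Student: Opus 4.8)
The plan is to establish the four-way equivalence by running the cycle (iii)~$\Rightarrow$~(i)~$\Rightarrow$~(ii)~$\Rightarrow$~(iv)~$\Rightarrow$~(iii), in close analogy with the proof of Proposition~\ref{pro:flid1}, but everywhere replacing the weak topology on $Y$ by the weak$^*$ topology induced by the predual $Z$. Throughout I would view $Z$ inside $Y^*=Z^{**}$ via the canonical isometric embedding $J_Z$, so that ``$A^*z$ for $z\in Z$'' always means $A^*$ applied to $J_Z z\in Y^*$. The two computational identities I would use repeatedly are: $v_k=\langle e^{(k)},v\rangle_{\ell^1\times\ell^\infty}$ to extract coordinates of $v\in\ell^\infty$, and the defining relation of the adjoint, which under the embedding reads $(A^*z)_k=\langle Ae^{(k)},z\rangle_{Z^*\times Z}$.

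For (iii)~$\Rightarrow$~(i): since $\langle e^{(k)},c\rangle=c_k\to 0$ for every $c\in c_0$, the unit vectors satisfy $e^{(k)}\rightharpoonup^* 0$ in $\ell^1=(c_0)^*$, and weak$^*$-to-weak$^*$ continuity together with $A0=0$ forces $Ae^{(k)}\rightharpoonup^* 0$ in $Y$, which is exactly~(i). For (i)~$\Rightarrow$~(ii): fix $z\in Z$ and note that, by the coordinate and adjoint identities above, $(A^*z)_k=\langle Ae^{(k)},z\rangle_{Z^*\times Z}\to 0$ as $k\to\infty$ by~(i); hence $A^*z\in c_0$, and since $z\in Z$ was arbitrary, $\mathcal{R}(A^*|_Z)\subseteq c_0$.

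The key step is (ii)~$\Rightarrow$~(iv). I would take the obvious candidate $S:=A^*|_Z$, which by~(ii) genuinely maps $Z$ into $c_0$, and whose boundedness follows from $\|Sz\|_{c_0}=\|A^*z\|_{\ell^\infty}\le\|A\|\,\|z\|_Z$, using that $J_Z$ is isometric. It then remains to verify $S^*=A$ as operators $\ell^1\to Y$. For $x\in\ell^1$ and $z\in Z$ I would expand $\langle z,S^*x\rangle_{Z\times Z^*}=\langle Sz,x\rangle_{c_0\times\ell^1}=\sum_k x_k(A^*z)_k=\sum_k x_k\langle Ae^{(k)},z\rangle=\langle Ax,z\rangle_{Z^*\times Z}$, where the final equality uses the norm convergence of $x=\sum_k x_k e^{(k)}$ in $\ell^1$ together with boundedness, hence continuity, of $A$ and of the pairing with $z$. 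As this holds for all $z\in Z$, we conclude $S^*x=Ax$. Finally, (iv)~$\Rightarrow$~(iii) is the standard fact that an adjoint operator $S^*:(c_0)^*\to Z^*$ is automatically continuous from $\sigma(\ell^1,c_0)$ to $\sigma(Z^*,Z)$, i.e.\ weak$^*$-to-weak$^*$ continuous, which closes the cycle.

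The main obstacle is not a deep estimate but the consistent bookkeeping of the three dualities in play, namely $\ell^1\times\ell^\infty$, $Z\times Z^*$, and $Y\times Y^*$, and the correct insertion of the canonical embedding $J_Z:Z\hookrightarrow Z^{**}=Y^*$ at every occurrence of ``$A^*z$ for $z\in Z$''; getting these identifications right is precisely what makes the weak$^*$ target topology behave like the weak one did in Proposition~\ref{pro:flid1}. A secondary point requiring care is the interchange of summation and duality pairing in the verification $S^*=A$, which I would justify by the $\ell^1$-norm convergence of the expansion of $x$ and the continuity of $A$ rather than formally. The separability of $Z$ enters only to ensure, via Lemma~\ref{lem:Alaoglu} and the attendant metrizability of the weak$^*$ topology on bounded subsets of $Y$, that the sequential formulations in (i) and (iii) are adequate.
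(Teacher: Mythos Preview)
Your proof is correct and follows essentially the same strategy as the paper's: the implications (iii)~$\Rightarrow$~(i) and (i)~$\Rightarrow$~(ii) are argued identically via the weak$^*$ convergence $e^{(k)}\rightharpoonup^*0$ in $\ell^1$ and the coordinate identity $(A^*z)_k=\langle Ae^{(k)},z\rangle_{Z^*\times Z}$. The only structural difference is that the paper closes the cycle by proving (ii)~$\Rightarrow$~(iii) directly (testing $\langle Ax_n,z\rangle=\langle x_n,A^*z\rangle_{\ell^1\times c_0}$ against weak$^*$ convergent sequences) and then invokes \cite[Theorem~3.1.11]{Meg98} for the equivalence (iii)~$\Leftrightarrow$~(iv), whereas you route through (ii)~$\Rightarrow$~(iv)~$\Rightarrow$~(iii) by explicitly constructing $S:=A^*\vert_Z$ and verifying $S^*=A$; your version is thus more self-contained, while the paper's is slightly shorter by outsourcing the adjoint characterization.
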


\begin{proof}
Let (i) be satisfied. Then for each $A^\ast z$ from $\mathcal{R}(A^\ast\vert_Z)$ we have
\begin{equation*}
[A^\ast z]_k=\la A^\ast z,e^{(k)}\ra_{\ell^\infty\times\ell^1}=\la z,Ae^{(k)}\ra_{Y^\ast\times Y}=\la Ae^{(k)},z\ra_{Z^\ast\times Z}\to 0
\end{equation*}
as $k\to\infty$. This yields $A^\ast z\in c_0$ and hence (ii) is valid.
\par
Now let (ii) be true and take a weakly$^*$ convergent sequence $x_n \rightharpoonup^* x_0$ in $\ell^1$ as $n \to \infty$. Then $\la Ax_n,z\ra_{Z^\ast\times Z}=\la z,Ax_n\ra_{Y^\ast\times Y}=\la A^\ast z,x_n\ra_{\ell^\infty\times\ell^1}$ for all $z$ in $Z$. Because moreover $A^\ast z$ belongs to $c_0$ and $\ell^1$ is the dual of $c_0$, we may write this as $\la A^\ast z,x_n\ra_{\ell^\infty\times\ell^1}=\la x_n,A^\ast z\ra_{\ell^1\times c_0}$.
Thus,
\begin{align*}
\lim_{n \to \infty}\la Ax_n,z\ra_{Z^\ast\times Z}
&=\lim_{n \to \infty}\la x_n,A^\ast z\ra_{\ell^1\times c_0}=\la x_0,A^\ast z\ra_{\ell^1\times c_0}\\&=\la Ax_0,z\ra_{Z^\ast\times Z}\quad\text{for all }\;z\in Z,
\end{align*}
which proves condition (iii). From (iii) and the fact that $e^{(k)} \rightharpoonup^* 0$ in $\ell^1$ as $k \to \infty$ we immediately obtain (i). Finally, the equivalence between (iii) and (iv) can be found, e.g., in \cite[Theorem 3.1.11]{Meg98}.
\end{proof}
As a consequence of item (iv) in Proposition~\ref{pro:flid2}, each weak$^*$-to-weak$^*$ continuous linear operator is automatically bounded. Figure \ref{fig:ops_and_spaces} illustrates the connection between the different spaces and operators we juggle around in this section.

\begin{figure}\begin{center}
\begin{tikzpicture}
  \matrix (m) [matrix of math nodes,row sep=3em,column sep=4em,minimum width=2em]
  {
     Z \subseteq Y^\ast & Y=Z^\ast & Y^\ast=Z^{\ast\ast} \\
     c_0 & \ell^1=(c_0)^\ast & \ell^\infty=(\ell^1)^\ast=(c_0)^{\ast\ast} \\};
  \path[-stealth]
    (m-1-1) edge node [left] {S} (m-2-1)
    (m-2-2) edge node [left] {$A=S^\ast$} (m-1-2)
    (m-1-3) edge node [left] {$A^\ast=S^{\ast\ast}$} (m-2-3);
\end{tikzpicture}
\end{center}\caption{Schematic display of the operators and underlying spaces needed in this section. Top and bottom row: the (separable) Banach spaces under consideration. Middle row: the operators we work with.}\label{fig:ops_and_spaces}
\end{figure}
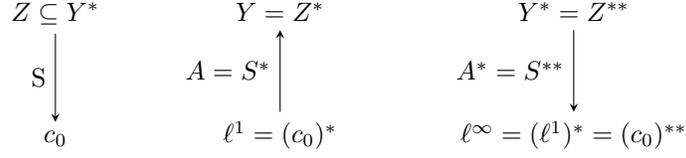

For the identity mapping $A=Id: \ell^1 \to Y$ with $Y=\ell^1$ and predual $Z=c_0$, property (i) of Proposition~\ref{pro:flid2} is trivially satisfied which yields the weak$^*$-to-weak$^*$ continuity of this operator.
Note that the case $Y=\ell^1$, $A=Id$ is only of theoretical interest. Precisely, it is a tool for exploring the frontiers of the theoretic framework we have chosen for investigating $\ell^1$-regularization. For practical applications it is irrelevant because one easily verifies that with the choice $p=1$ in \eqref{eq:TIK}, where we have $Y=\ell^1$, the $\ell^1$-regularized solutions coincide with the data $y^\delta$ if $\alpha<1$ and we have the best possible rate \eqref{eq:correctrate}.

Main parts of the above mentioned Proposition~\ref{pro:prorep} on existence, stability and convergence of $\ell^1$-regularized solutions $\xad$ remain true if the operator $A: \ell^1 \to Y$ is only weak$^*$-to-weak$^*$ continuous.
The sparsity property $\xad \in \ell^0$, however, will fail in general (consider the example of the identity as mentioned above). Existence, stability and convergence assertions remain valid, because their proofs basically rely on the fact that the mapping $x \mapsto \|Ax-y^\delta\|_Y$ is a weakly$^*$ lower semicontinuous functional. This is the case in both variants, with or without $^*$, since the norm functional is weakly and also weakly$^*$ lower semicontinuous. For the existence of regularized solutions
(minimizers of the Tikhonov functional \eqref{eq:TIK}) again the Banach-Alaoglu theorem (Lemma \ref{lem:Alaoglu}) is required and yields weakly$^*$ compact level sets of the $\ell^1$-norm functional.

Our goal is to proof an analogue to Proposition \ref{prop:weakstarweak} for weak$^*$-to-weak$^*$ continuous operators. We start with a first observation.

\begin{proposition}\label{prop:ra}
Let $A:\ell^1\rightarrow Y$ be injective and weak$^*$-to-weak$^*$ continuous and let $Y=Z^\ast$ for some Banach space $Z$. Then
\begin{equation*}
\overline{\mathcal{R}(A^\ast\vert_Z)}^{\ell^\infty}=c_0.
\end{equation*}
\end{proposition}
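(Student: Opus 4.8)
The plan is to establish the two inclusions $\overline{\mathcal{R}(A^\ast\vert_Z)}^{\ell^\infty}\subseteq c_0$ and $c_0\subseteq\overline{\mathcal{R}(A^\ast\vert_Z)}^{\ell^\infty}$ separately, following the pattern of Proposition~\ref{prop:weakstarweak} but replacing the full adjoint $A^\ast:Y^\ast\to\ell^\infty$ by its restriction $A^\ast\vert_Z$ to the predual $Z\subseteq Y^\ast$. The first inclusion is immediate: since $A$ is weak$^*$-to-weak$^*$ continuous, Proposition~\ref{pro:flid2} gives $\mathcal{R}(A^\ast\vert_Z)\subseteq c_0$, and because $c_0$ is a closed subspace of $\ell^\infty$ we may pass to the closure to get $\overline{\mathcal{R}(A^\ast\vert_Z)}^{\ell^\infty}\subseteq c_0$.

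For the reverse inclusion, the natural route is to prove the contrapositive via a separation/duality argument, exactly as in the weak$^*$-to-weak case. First I would observe that $\overline{\mathcal{R}(A^\ast\vert_Z)}^{\ell^\infty}$ is a closed subspace of $c_0$. Suppose for contradiction that it is a proper subspace of $c_0$. Then by Hahn--Banach there exists a nonzero functional on $c_0$ that annihilates $\mathcal{R}(A^\ast\vert_Z)$; since $(c_0)^\ast=\ell^1$, this functional is represented by some nonzero $w\in\ell^1$ with $\la A^\ast z,w\ra_{\ell^\infty\times\ell^1}=0$ for all $z\in Z$. The crucial step is then to rewrite this pairing: because $A^\ast z\in c_0$ and $\ell^1=(c_0)^\ast$, we have $\la A^\ast z,w\ra_{\ell^\infty\times\ell^1}=\la w,A^\ast z\ra_{\ell^1\times c_0}=\la Aw,z\ra_{Z^\ast\times Z}$, using the defining relation $A=S^\ast$ (equivalently the adjoint identity $\la A^\ast z,x\ra=\la z,Ax\ra$) from Proposition~\ref{pro:flid2}. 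Thus $\la Aw,z\ra_{Z^\ast\times Z}=0$ for all $z\in Z$, which forces $Aw=0$ in $Y=Z^\ast$. By injectivity of $A$ this gives $w=0$, contradicting $w\neq 0$. Hence $\overline{\mathcal{R}(A^\ast\vert_Z)}^{\ell^\infty}=c_0$.

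The main obstacle, and the only place where care is genuinely required, is the dualization step in which the $\ell^\infty\times\ell^1$ pairing is re-expressed as the $Z^\ast\times Z$ pairing. This is precisely where the restriction to the predual $Z$ (rather than all of $Y^\ast$) is essential: the identity $\la w,A^\ast z\ra_{\ell^1\times c_0}=\la Aw,z\ra_{Z^\ast\times Z}$ requires $A^\ast z\in c_0$ so that $w\in\ell^1$ acts on it as an element of $(c_0)^\ast$, and it requires $z\in Z$ so that the pairing $\la Aw,z\ra$ lives in the predual duality $Z^\ast\times Z$ where injectivity of $A$ can be applied. I would make sure the adjoint relation being invoked is the one already verified in the proof of Proposition~\ref{pro:flid2}, namely $\la A^\ast z,x\ra_{\ell^\infty\times\ell^1}=\la z,Ax\ra_{Y^\ast\times Y}=\la Ax,z\ra_{Z^\ast\times Z}$ for $z\in Z$, so that no new duality computation is introduced. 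Everything else (closedness of $c_0$, the Hahn--Banach separation, representation of $(c_0)^\ast$) is standard and needs only a one-line citation to Lemma~\ref{lem:Alaoglu}'s ambient framework and the identification $(c_0)^\ast=\ell^1$.
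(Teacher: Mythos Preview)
Your proposal is correct and is essentially the same argument as the paper's. The paper introduces the operator $S:Z\to c_0$ with $A=S^\ast$ from Proposition~\ref{pro:flid2}(iv), verifies $A^\ast\vert_Z=S$, and then invokes the standard duality identity $\overline{\mathcal{R}(S)}^{c_0}=\mathcal{N}(S^\ast)_\perp=\mathcal{N}(A)_\perp=c_0$; your Hahn--Banach separation step is precisely this identity unpacked by hand, and the pairing manipulation you flag as the ``crucial step'' is exactly the computation the paper uses to show $A^\ast\vert_Z=S$.
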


\begin{proof}
From item (iv) of Proposition~\ref{pro:flid2} we take the operator $S:Z\rightarrow c_0$ with $A=S^\ast$. As $A$ is injective, i.e., $\mathcal{N}(A)=\{ 0 \}$, it follows 
\[
\overline{\mathcal{R}(S)}^{c_0}=\mathcal{N}(S^\ast)_\perp=\mathcal{N}(A)_\perp=c_0.
\]
There, the subscript denotes the pre-annihilator for a set $V$, in our situation with $(c_0)^\ast=\ell^1$ and $V\subseteq \ell^1$ defined as
\[
V_\perp:=\{ x\in c_0: \langle \zeta, x\rangle_{\ell^1\times c_0}=0 \quad \forall \zeta\in V\}.
\]
Let $\eta\in Z$ and recall $Y^\ast=Z^{\ast\ast}$ (cf. Figure \ref{fig:ops_and_spaces}). Then for each $x\in\ell^1$
\begin{align*}
\la A^{\ast}\,\eta,x \ra_{\ell^\infty\times \ell^1}
&=\la \eta,A\,x\ra_{Y^\ast \times Y}
=\la \eta,A\,x\ra_{Z\times Y}
=\la S\,\eta,x\ra_{c_0\times \ell^1}\\
&=\la S\,\eta,x\ra_{\ell^\infty\times \ell^1},
\end{align*}
i.e., $A^{\ast}\vert_{Z}=S$. Thus  $\overline{\mathcal{R}(A^\ast\vert_{Z})}^{\ell^\infty}=\overline{\mathcal{R}(S)}^{c_0}=c_0$. At this point we emphasize that in both Banach spaces $\ell^\infty$ and $c_0$ the same
supremum norm applies.
\end{proof}
We will show in Proposition \ref{prop:weakstarweakstar} that conversely $\overline{\mathcal{R}(A^\ast\vert_Z)}^{\ell^\infty}=c_0$ implies injectivity for weak$^*$-to-weak$^*$ continuous operators. Before doing so we need the following Proposition which coincides in principle with \cite[Proposition~9]{FleGer17}.
\begin{proposition}\label{th:ra}
Let $A$ be injective and weak$^*$-to-weak$^*$ continuous. Moreover, let $\varepsilon>0$ and $n\in\mathbb{N}$. Then for each $\xi\in c_0$ there exists $\tilde{\xi}\in\mathcal{R}(A^\ast)$ such that
\begin{equation*}
\tilde{\xi}_k=\xi_k\quad\text{for $\;\;k\leq n$}\qquad\text{and}\qquad\vert\tilde{\xi}_k-\xi_k\vert\leq\varepsilon\quad\text{for $\;\;k>n$}.
\end{equation*}
\end{proposition}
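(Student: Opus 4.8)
The plan is to leverage the density statement $\overline{\mathcal{R}(A^\ast\vert_Z)}^{\ell^\infty}=c_0$ from Proposition~\ref{prop:ra}, writing $S:=A^\ast\vert_Z$ for the operator supplied by Proposition~\ref{pro:flid2}(iv), and to upgrade mere sup-norm density into \emph{exact} agreement on the first $n$ coordinates by means of a finite-dimensional correction. Since $Z\subseteq Z^{\ast\ast}=Y^\ast$, every element of $\mathcal{R}(S)=\mathcal{R}(A^\ast\vert_Z)$ already lies in $\mathcal{R}(A^\ast)$, so it suffices to produce the desired $\tilde\xi$ inside $\mathcal{R}(S)$. I would fix two small parameters $\varepsilon_0,\varepsilon_1>0$ (to be pinned down at the end) and proceed in two stages: a rough approximation of $\xi$ itself, followed by a correction that repairs the first $n$ coordinates.

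First I would use density to pick $z^{(0)}\in Z$ with $\|Sz^{(0)}-\xi\|_{\ell^\infty}<\varepsilon_0$; the residual $r_k:=\xi_k-(Sz^{(0)})_k$ then satisfies $|r_k|<\varepsilon_0$ for $k\le n$, and the tail of $Sz^{(0)}$ already approximates $\xi$ to within $\varepsilon_0$. The only defect is that the leading coordinates are close but not equal. To cure this, I would again invoke density to choose, for each $j\le n$, an element $u^{(j)}\in Z$ with $\|Su^{(j)}-e^{(j)}\|_{\ell^\infty}<\varepsilon_1$. The $n\times n$ matrix $M$ with entries $M_{ij}=(Su^{(j)})_i$ $(i,j\le n)$ is then close to the identity, hence invertible for $\varepsilon_1$ small, with a controlled $\|M^{-1}\|$. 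Setting $c:=M^{-1}r$ and $w:=\sum_{j=1}^n c_j u^{(j)}$, the first $n$ coordinates of $Sw$ equal $r$ exactly, while for $k>n$ the bound $|(Su^{(j)})_k|=|(Su^{(j)})_k-e^{(j)}_k|<\varepsilon_1$ forces $|(Sw)_k|\le\|c\|_1\varepsilon_1$, a quantity of order $n\,\varepsilon_0\varepsilon_1$ because $\|c\|_1$ is controlled by $\|M^{-1}\|$ and $\|r\|\lesssim\varepsilon_0$.

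Finally I would set $\tilde\xi:=S(z^{(0)}+w)\in\mathcal{R}(A^\ast)$. For $k\le n$ the correction cancels the residual exactly, giving $\tilde\xi_k=(Sz^{(0)})_k+r_k=\xi_k$, whereas for $k>n$ one has $|\tilde\xi_k-\xi_k|\le|(Sz^{(0)})_k-\xi_k|+|(Sw)_k|<\varepsilon_0+C\,n\,\varepsilon_0\varepsilon_1$, which is made $\le\varepsilon$ by choosing $\varepsilon_0$ and then $\varepsilon_1$ small enough. The main obstacle is precisely the correction step: density alone delivers only sup-norm closeness, and the real content is to convert ``close on the first $n$ coordinates'' into ``equal on the first $n$ coordinates'' without spoiling the tail. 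The near-identity-matrix device is what makes this work, since it simultaneously guarantees solvability of the finite linear system fixing the leading coordinates and smallness of the coefficients $c$, the latter being exactly what keeps the tail of the correction below $\varepsilon$.
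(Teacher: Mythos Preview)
Your argument is correct and constitutes a genuinely different route from the paper's. The paper proves the statement by induction on $n$: at each step it manufactures two approximants that overshoot and undershoot the target value at the new coordinate (by shifting $\xi$ by $\pm\varepsilon$ there and invoking density), and then takes a convex combination to land exactly on $\xi_{m+1}$ while keeping the first $m$ coordinates fixed and the tail within $\varepsilon$. Your approach instead corrects all $n$ leading coordinates in one shot via a finite-dimensional linear system: approximate each $e^{(j)}$, assemble the near-identity matrix $M$, invert it, and use $c=M^{-1}r$ to build a corrector. Both proofs rest on the same density input (Proposition~\ref{prop:ra}) and both output an element of $\mathcal{R}(A^\ast\vert_Z)\subseteq\mathcal{R}(A^\ast)$. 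The paper's convex-combination trick is slightly more elementary in that it never needs matrix invertibility or a Neumann-series bound; your method avoids induction and makes the dependence of the tail bound on $n$, $\varepsilon_0$, $\varepsilon_1$ explicit, which is arguably cleaner if one later wants quantitative control (as in Property~\ref{property_ill_posedness}). One cosmetic point: when you write that $\|c\|_1$ is ``controlled by $\|M^{-1}\|$ and $\|r\|$'', the factor $n$ in the final bound enters through $\|c\|_1\le n\|c\|_\infty$, which you might state explicitly.
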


\begin{proof}
We proof the proposition by induction with respect to $n$. For $\xi\in c_0$ set
\begin{equation*}
\xi^+:=(\xi_1+\varepsilon,\xi_2,\xi_3,\ldots)\qquad\text{and}\qquad\xi^-:=(\xi_1-\varepsilon,\xi_2,\xi_3,\ldots).
\end{equation*}
By Proposition~\ref{prop:ra} we have that $c_0=\overline{\mathcal{R}(A^\ast\vert_Z)}^{\ell^\infty}\subset \overline{\mathcal{R}(A^\ast)}^{\ell^\infty}$. Hence we find elements $\tilde{\xi}^+\in\mathcal{R}(A^\ast)$ and $\tilde{\xi}^-\in\mathcal{R}(A^\ast)$ with
\begin{equation*}
\|\tilde{\xi}^+-\xi^+\|_{\ell^\infty}\leq\varepsilon\qquad\text{and}\qquad\|\tilde{\xi}^--\xi^-\|_{\ell^\infty}\leq\varepsilon.
\end{equation*}
Consequently, $\tilde{\xi}^+_1\geq\xi_1\geq\tilde{\xi}^-_1$ and $\vert\tilde{\xi}^+_k-\xi_k\vert\leq\varepsilon$ as well as $\vert\tilde{\xi}^-_k-\xi_k\vert\leq\varepsilon$ for $k>1$. Thus we find a convex combination $\tilde{\xi}$ of $\tilde{\xi}^+$ and $\tilde{\xi}^-$ such that $\tilde{\xi}_1=\xi_1$. This $\tilde{\xi}$ obviously also satisfies $\vert\tilde{\xi}_k-\xi_k\vert\leq\varepsilon$ for $k>1$, which proves the proposition for $n=1$.
\par
Now let the proposition be true for $n=m$. We prove it for $n=m+1$.
Let $\xi\in c_0$ and set
\begin{align*}
\xi^+&:=(\xi_1,\ldots,\xi_m,\xi_{m+1}+\varepsilon,\xi_{m+2},\xi_{m+3},\ldots),\\
\xi^-&:=(\xi_1,\ldots,\xi_m,\xi_{m+1}-\varepsilon,\xi_{m+2},\xi_{m+3},\ldots).
\end{align*}
By the induction hypothesis we find $\tilde{\xi}^+\in\mathcal{R}(A^\ast)$ and $\tilde{\xi}^-\in\mathcal{R}(A^\ast)$ with
\begin{equation*}
\tilde{\xi}^+_k=\xi_k=\tilde{\xi}^-_k\quad\text{for $k\leq m$}
\end{equation*}
and
\begin{equation*}
\vert\tilde{\xi}^+_k-\xi^+_k\vert\leq\varepsilon\quad\text{and}\quad\vert\tilde{\xi}^-_k-\xi^-_k\vert\leq\varepsilon\quad\text{for $k>m$}.
\end{equation*}
Consequently, we have $\tilde{\xi}^+_{m+1}\geq\xi_{m+1}\geq\tilde{\xi}^-_{m+1}$ and $\vert\tilde{\xi}^+_k-\xi_k\vert\leq\varepsilon$ as well as $\vert\tilde{\xi}^-_k-\xi_k\vert\leq\varepsilon$ for $k>m+1$. Thus we find a convex combination $\tilde{\xi}$ of $\tilde{\xi}^+$ and $\tilde{\xi}^-$ such that $\tilde{\xi}_{m+1}=\xi_{m+1}$. This $\tilde{\xi}$ obviously also satisfies $\tilde{\xi}_k=\xi_k$ for $k<m+1$ and $\vert\tilde{\xi}_k-\xi_k\vert\leq\varepsilon$ for $k>m+1$, which proves the proposition for $n=m+1$.
\end{proof}

Now we come to the main result of this section. The proof is similar and in part identical to the one of Proposition 12 in \cite{FleGer17}.

\begin{proposition}\label{prop:weakstarweakstar}
Let $A:\ell^1\rightarrow Y$ be bounded, linear and weak$^\ast$-to-weak$^\ast$ continuous. Then the following assertions are equivalent.
\begin{itemize}
\item[(i)]\quad
$A$ is injective,
\item[(ii)]\quad
$\overline{\mathcal{R}(A^\ast\vert_Z)}^{\ell^\infty}=c_0$,
\item[(iii)]\quad
$e^{(k)}\in\overline{\mathcal{R}(A^\ast\vert_Z)}^{\ell^\infty}\;\; $ for all $\;\;k\in\mathbb{N}$,
\item[(iv)]\quad
Property \ref{property_ill_posedness} holds.
\end{itemize}
\end{proposition}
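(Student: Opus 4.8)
The plan is to prove the cycle $(i)\Rightarrow(ii)\Rightarrow(iii)\Rightarrow(i)$, establishing the equivalence of the first three assertions, and then to treat $(iv)$ via the separate equivalence $(i)\Leftrightarrow(iv)$. The implication $(i)\Rightarrow(ii)$ is exactly Proposition~\ref{prop:ra} and needs no further argument. For $(ii)\Rightarrow(iii)$ I would simply note that each $e^{(k)}$ lies in $c_0$, and hence in $\overline{\mathcal{R}(A^\ast\vert_Z)}^{\ell^\infty}$ as soon as (ii) holds.

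For $(iii)\Rightarrow(i)$ I would argue directly that $A$ is injective. Assume $Ax=0$ for some $x\in\ell^1$ and fix $k$. By (iii) there are $z_j\in Z$ with $\|A^\ast z_j-e^{(k)}\|_{\ell^\infty}\to 0$. The adjoint relation gives $\langle A^\ast z_j,x\rangle_{\ell^\infty\times\ell^1}=\langle z_j,Ax\rangle_{Y^\ast\times Y}=0$, while the norm-continuity of the functional $v\mapsto\langle v,x\rangle$ on $\ell^\infty$ yields $|\langle A^\ast z_j-e^{(k)},x\rangle|\leq\|A^\ast z_j-e^{(k)}\|_{\ell^\infty}\|x\|_{\ell^1}\to 0$. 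Passing to the limit gives $x_k=\langle e^{(k)},x\rangle=0$, and since $k$ is arbitrary, $x=0$.

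The substantive part is $(i)\Rightarrow(iv)$, and here I expect the main obstacle. Applying Proposition~\ref{th:ra} with $\varepsilon=\mu$ to any admissible $\xi$ (supported in $\{1,\dots,n\}$) at once produces $\eta\in Y^\ast$ with $A^\ast\eta=\tilde\xi$ satisfying $P_nA^\ast\eta=\xi$ (item (a)) and $|[(I-P_n)A^\ast\eta]_k|=|\tilde\xi_k|\leq\mu$ for $k>n$ (item (b)). What this does \emph{not} deliver is the uniform bound (c): the norm $\|\eta\|_{Y^\ast}$ may depend on $\xi$, whereas Property~\ref{property_ill_posedness} requires a single sequence $\{\gamma_n\}$ valid for all admissible $\xi$. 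I would resolve this by a finite-dimensional convexity argument. For fixed $n$ the admissible $\xi$ fill the cube $[-1,1]^n$ (extended by zeros), the convex hull of its $2^n$ vertices in $\{-1,1\}^n$. For each of these finitely many vertices I select, via Proposition~\ref{th:ra}, a functional $\eta^{(j)}$ satisfying (a) and (b), and set $\gamma_n:=\max_j\|\eta^{(j)}\|_{Y^\ast}<\infty$. For a general admissible $\xi=\sum_j\lambda_j\xi^{(j)}$ (convex combination of vertices) I take $\eta:=\sum_j\lambda_j\eta^{(j)}$: linearity of $P_nA^\ast$ preserves (a), the triangle inequality together with $\sum_j\lambda_j=1$ preserves the bound $\mu$ in (b), and convexity of the norm gives $\|\eta\|_{Y^\ast}\leq\sum_j\lambda_j\|\eta^{(j)}\|_{Y^\ast}\leq\gamma_n$, which is (c).

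Finally, for $(iv)\Rightarrow(i)$ I would once more assume $Ax=0$, fix $n$, and choose $\xi_k=\operatorname{sgn}(x_k)$ for $k\leq n$ and $\xi_k=0$ for $k>n$. Property~\ref{property_ill_posedness} supplies $\eta$ with $[A^\ast\eta]_k=\operatorname{sgn}(x_k)$ for $k\leq n$ and $|[A^\ast\eta]_k|\leq\mu$ for $k>n$. Expanding $0=\langle\eta,Ax\rangle=\langle A^\ast\eta,x\rangle=\sum_k[A^\ast\eta]_k x_k$ and splitting the sum at $n$ gives $\sum_{k\leq n}|x_k|=-\sum_{k>n}[A^\ast\eta]_k x_k$, so $\sum_{k\leq n}|x_k|\leq\mu\|x\|_{\ell^1}$. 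Letting $n\to\infty$ yields $\|x\|_{\ell^1}\leq\mu\|x\|_{\ell^1}$ with $\mu<1$, forcing $x=0$. Together with the cycle above, this establishes the equivalence of all four assertions.
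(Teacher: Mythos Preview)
Your proof is correct; the logical organization differs from the paper's but covers the same ground. The paper runs the single cycle $(i)\Rightarrow(iv)\Rightarrow(iii)\Rightarrow(ii)\Rightarrow(i)$, whereas you close the loop $(i)\Rightarrow(ii)\Rightarrow(iii)\Rightarrow(i)$ first (using Proposition~\ref{prop:ra} for the first step) and then bolt on $(i)\Leftrightarrow(iv)$ separately. Both routes are sound; yours is slightly redundant since $(iv)\Rightarrow(i)$ and $(iii)\Rightarrow(i)$ are really the same annihilator/duality idea in two guises.

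The one genuinely different step is the construction in $(i)\Rightarrow(iv)$. The paper applies Proposition~\ref{th:ra} to the $n$ unit vectors $e^{(1)},\dots,e^{(n)}$ with accuracy $\varepsilon=\mu/n$, obtaining preimages $\eta_1,\dots,\eta_n$, and then writes a general $\xi=\sum_{k=1}^n c_k e^{(k)}$ with $|c_k|\le 1$ as $A^\ast\bigl(\sum c_k\eta_k\bigr)$; this gives $\gamma_n=\sum_{k=1}^n\|\eta_k\|_{Y^\ast}$. You instead apply Proposition~\ref{th:ra} with accuracy $\mu$ to each of the $2^n$ vertices of the cube $[-1,1]^n$ and take convex combinations, which yields $\gamma_n=\max_j\|\eta^{(j)}\|_{Y^\ast}$ over those $2^n$ choices. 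Either construction is legitimate; the paper's version needs only $n$ invocations of Proposition~\ref{th:ra} and produces a $\gamma_n$ with a more transparent structure (a sum of $n$ terms, reducing to \eqref{eq:gamma1} when \eqref{eq:BFH} holds), while your vertex argument is conceptually cleaner about why a uniform bound exists at all.
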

\begin{proof}
We show (i) $\Rightarrow$ (iv) $\Rightarrow$ (iii) $\Rightarrow$ (ii) $\Rightarrow$ (i).
\par
(i)$\Rightarrow$(iv): Fix $\mu\in(0,1)$, $n\in\mathbb{N}$ and take some $\xi$ as described in Property \ref{property_ill_posedness}.
By Proposition~\ref{th:ra} with $\varepsilon:=\mu$ there exists some $\eta$ such that $A^\ast\eta$ ($=\tilde{\xi}$ in the proposition) satisfies items (a) and (b) in Property \ref{property_ill_posedness}. In particular we have $\{\eta_k\}_{k\in{1,\dots,n}}$ such that $P_nA^\ast\eta_k=e^{(k)}$ and $|[(I-P_n)A^\ast\eta_k]_i|\leq \frac{\mu}{n}$ for all $i>n$. Since $\xi\in c_0$ it is
\[
\xi=\sum \limits_{k=1}^n c_k e^{(k)}=\sum \limits_{k=1}^n c_k A^\ast\eta_k=A^\ast\left(\sum \limits_{k=1}^n c_k \eta_k\right),
\]
for coefficients $-1\leq c_k\leq 1$, i.e., $\xi=A^\ast\eta$ with $||\eta||\leq \sum \limits_{k=1}^n ||\eta_k||$ as an upper bound for $\gamma_n$. By construction this $\eta$ also fulfills 
\[
|[(I-P_n)A^\ast\eta]_i|\leq \sum\limits_{i=1}^n |[(I-P_n)A^\ast\eta_k]_i|\leq\mu.
\]
\par
(iv)$\Rightarrow$(iii): Fix $k$, fix $n\geq k$, take a sequence $(\mu_m)_{m\in\mathbb{N}}$ in $(0,1)$ with $\mu_m\to 0$ and choose $\xi:=e^{(k)}$ in Property \ref{property_ill_posedness}. Then for a corresponding sequence $(\eta_m)_{m\in\mathbb{N}}$ from Property \ref{property_ill_posedness} we obtain
\begin{equation*}
\|e^{(k)}-A^\ast\eta_m\|_{\ell^\infty}
\leq\|e^{(k)}-P_nA^\ast\eta_m\|_{\ell^\infty}+\|(I-P_n)A^\ast\eta_m\|_{\ell^\infty}.
\end{equation*}
The first summand is zero by the choice of $\xi$ and the second summand is bounded by $\mu_m$. Thus, $\|e^{(k)}-A^\ast\eta_m\|_{\ell^\infty}\to 0$ if $m\to\infty$.
\par
(iii)$\Rightarrow$(ii):
$(e^{(k)})_{k\in\mathbb{N}}$ is a Schauder basis in $c_0$. Thus, $c_0\subseteq\overline{\mathcal{R}(A^\ast\vert_Z)}^{\ell^\infty}$. Proposition \ref{pro:flid2} yields $\overline{\mathcal{R}(A^\ast\vert_Z)}^{\ell^\infty}\subseteq c_0$. Hence $\overline{\mathcal{R}(A^\ast\vert_Z)}=c_0$.
\par
(ii)$\Rightarrow$(i):
One easily shows $\overline{\mathcal{R}(A^\ast)}^{\ell^\infty}\subseteq\mathcal{N}(A)^\perp$. Thus $c_0\subseteq\mathcal{N}(A)^\perp$. If we have $x\in\ell^1$ with $Ax=0$, then for each $u\in c_0\subseteq\mathcal{N}(A)^\perp$ we obtain
\begin{equation*}
\la x,u\ra_{\ell^1\times c_0}=\la u,x\ra_{\ell^\infty\times\ell^1}=0,
\end{equation*}
which is equivalent to $x=0$.
\end{proof}
Since, in the context of both Propositions~\ref{prop:weakstarweak} and \ref{prop:weakstarweakstar}, the injectivity of $A$ yields Property~\ref{property_ill_posedness}, the consequences with respect to variational source conditions and convergence rate results are identical for a weak$^*$-to-weak and a weak$^*$-to-weak$^*$ continuous operator $A$. We formulate the following theorem and the subsequent corollary and prove the theorem for a weak$^*$-to-weak$^*$ continuous
 operator $A:\ell^1 \to Y$. In particular, the corollary requires the existence of a separable predual space $Z$ of $Y$ in order to apply Lemma~\ref{lem:Alaoglu} and to ensure the stabilizing property of the Tikhonov penalty. However, the proof of the theorem repeats point by point the ideas of the proof from \cite[Corollary 11]{FleGer17} focused on weak$^*$-to-weak continuous operators $A$.

\begin{theorem}\label{thm:rates}
Let the bounded linear operator $A: \ell^1 \to Y$ be injective and weak$^*$-to-weak$^*$ continuous, and assume additionally that the Banach space $Y$ possesses a separable predual Banach space $Z$ with $Z^*=Y$. Moreover, let $\mu \in [0,1)$ and $\{\gamma_n\}_{n\in\mathbb{N}}$ be such that Property \ref{property_ill_posedness} is fulfilled. Then a variational source condition \eqref{eq:VI} with the constant $\beta=\frac{1-\mu}{1+\mu} \in [0,1)$ and the concave index function $\varphi$ given by \eqref{eq:generalphi} is fulfilled.
\end{theorem}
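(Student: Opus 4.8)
The plan is to upgrade the elementary splitting inequality \eqref{eq:firstlemma} into the variational source condition \eqref{eq:VI} by using Property~\ref{property_ill_posedness} to bound the finite part $\sum_{k=1}^n|x_k-x^\dagger_k|$ by the residual norm $\|A(x-x^\dagger)\|_Y$ at the cost of a controllable fraction $\mu$ of the tail. The argument follows \cite[Corollary~11]{FleGer17} essentially verbatim; the only change for the weak$^*$-to-weak$^*$ setting is that the source element now lives in $Y^\ast$, which is harmless because Property~\ref{property_ill_posedness} is assumed (equivalently, guaranteed by the injectivity of $A$ via Proposition~\ref{prop:weakstarweakstar}) and delivers exactly such an $\eta\in Y^\ast$. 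First I would fix $x\in\ell^1$ and $n\in\mathbb{N}$ and pick the admissible test sequence $\xi\in\ell^\infty$ with $\xi_k=\operatorname{sign}(x_k-x^\dagger_k)$ for $k\le n$ and $\xi_k=0$ for $k>n$; this $\xi$ fulfils \eqref{eq:xi}, so Property~\ref{property_ill_posedness} yields $\eta\in Y^\ast$ satisfying (a)--(c).

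With this $\xi$ one has $\sum_{k=1}^n|x_k-x^\dagger_k|=\langle\xi,x-x^\dagger\rangle_{\ell^\infty\times\ell^1}$, and since $\xi=P_nA^\ast\eta$ by (a) I would split
\begin{equation*}
\sum_{k=1}^n|x_k-x^\dagger_k|=\langle A^\ast\eta,x-x^\dagger\rangle_{\ell^\infty\times\ell^1}-\langle (I-P_n)A^\ast\eta,x-x^\dagger\rangle_{\ell^\infty\times\ell^1}.
\end{equation*}
The first term equals $\langle\eta,A(x-x^\dagger)\rangle_{Y^\ast\times Y}$, hence is bounded by $\gamma_n\|A(x-x^\dagger)\|_Y$ via (c); the second is bounded by $\mu\sum_{k>n}|x_k-x^\dagger_k|$ via (b). Abbreviating $t:=\|A(x-x^\dagger)\|_Y$, $a:=\sum_{k=1}^n|x_k-x^\dagger_k|$ and $b:=\sum_{k>n}|x_k-x^\dagger_k|$, this is the coupled estimate $a\le\gamma_n t+\mu b$.

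The decisive step is the tail absorption. Using $\|x-x^\dagger\|_{\ell^1}=a+b$, the estimate gives $a(1+\mu)\le\gamma_n t+\mu\|x-x^\dagger\|_{\ell^1}$; inserting the resulting bound on $2a$ into \eqref{eq:firstlemma} and collecting the $\|x-x^\dagger\|_{\ell^1}$ contributions on the left yields, after division by $1+\mu$,
\begin{equation*}
\frac{1-\mu}{1+\mu}\,\|x-x^\dagger\|_{\ell^1}\le\|x\|_{\ell^1}-\|x^\dagger\|_{\ell^1}+2\Big(\sum_{k>n}|x^\dagger_k|+\tfrac{\gamma_n}{1+\mu}\,t\Big).
\end{equation*}
Because $\tfrac{1}{1+\mu}\le1$ I may enlarge $\tfrac{\gamma_n}{1+\mu}$ to $\gamma_n$, giving $\beta\|x-x^\dagger\|_{\ell^1}\le\|x\|_{\ell^1}-\|x^\dagger\|_{\ell^1}+2(\sum_{k>n}|x^\dagger_k|+\gamma_n t)$ with $\beta=\frac{1-\mu}{1+\mu}$. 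Since the left-hand side does not depend on $n$, taking the infimum over $n\in\mathbb{N}$ of the right-hand side turns the bracket into $\varphi(t)$ from \eqref{eq:generalphi}, which is precisely \eqref{eq:VI}. That $\varphi$ is a well-defined concave index function follows from Lemma~\ref{lem:concave}, noting that $\{\gamma_n\}$ may be assumed non-decreasing (it is so in the construction behind Proposition~\ref{prop:weakstarweakstar}, and otherwise one replaces $\gamma_n$ by $\max_{m\le n}\gamma_m$, which preserves Property~\ref{property_ill_posedness}).

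The main obstacle is entirely the tail-absorption bookkeeping: one must route the finite part through $A^\ast\eta$, peel off the $(I-P_n)$-tail at cost $\mu b$, and then solve the coupled estimate for $\|x-x^\dagger\|_{\ell^1}$ so that exactly the factor $\frac{1-\mu}{1+\mu}$ appears and the per-$n$ inequality is uniform enough to survive the infimum. The duality identifications are routine once one is careful about which pairing, $\ell^\infty\times\ell^1$ or $Y^\ast\times Y$, is being used at each step.
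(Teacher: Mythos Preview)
Your proof is correct and follows essentially the same approach as the paper: both pick $\xi=\operatorname{sgn}P_n(x-x^\dagger)$, invoke Property~\ref{property_ill_posedness} to obtain the key coupled bound $\sum_{k\le n}|x_k-x^\dagger_k|\le\mu\sum_{k>n}|x_k-x^\dagger_k|+\gamma_n\|A(x-x^\dagger)\|_Y$, and then carry out tail absorption to land on $\beta=\frac{1-\mu}{1+\mu}$. The only cosmetic difference is that you route the final bookkeeping through the ready-made inequality~\eqref{eq:firstlemma}, whereas the paper expands $\beta\|x-x^\dagger\|_{\ell^1}-\|x\|_{\ell^1}+\|x^\dagger\|_{\ell^1}$ directly; both computations are equivalent and yield the identical per-$n$ estimate before the infimum.
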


\begin{proof}
Fix $n\in\mathbb{N}$ and $x\in\ell^1$ and let $\xi:=\mathrm{sgn}\, P_n(x-x^\dagger)\in\ell^\infty$ be the sequence of signs of $P_n(x-x^\dagger)$.
Then by Property \ref{property_ill_posedness} there is some $\eta$ such that
\begin{align}\label{eq:finitedim_condfour}
\lefteqn{\sum \limits_{k=1}^n |x_k-x^\dagger_k|\nonumber
=\langle \xi, x-x^\dag\rangle_{\ell^\infty\times\ell^1}= \langle P_n A^\ast \eta, x-x^\dag\rangle_{\ell^\infty\times\ell^1}}\nonumber\\
&\qquad=\langle P_n A^\ast \eta-A^\ast\eta, x-x^\dag\rangle_{\ell^\infty\times\ell^1}+\langle  A^\ast \eta, x-x^\dag\rangle_{\ell^\infty\times\ell^1}\nonumber\\
&\qquad=-\langle (I-P_n)A^\ast\eta, (I-P_n)(x-x^\dag)\rangle_{\ell^\infty\times\ell^1}+\langle  A^\ast \eta, x-x^\dag\rangle_{\ell^\infty\times\ell^1}\nonumber\\
&\qquad\leq \mu\|(I-P_n)(x-x^\dag)\|_{\ell^1}+\gamma_n\|Ax-Ax^\dag\|_Y.
\end{align}
The triangle inequality yields
\begin{equation}\label{eq:pm}
\|P_n(x-x^\dagger)\|_{\ell^1}\leq \mu\bigl(\|(I-P_n)x\|_{\ell^1}+\|(I-P_n)x^\dagger\|_{\ell^1}\bigr)+\gamma_n\|Ax-Ax^\dagger\|_Y.
\end{equation}
Now
\begin{align*}
\lefteqn{\beta\|x-x^\dagger\|_{\ell^1}-\|x\|_{\ell^1}+\|x^\dagger\|_{\ell^1}}\\
&=\beta\|P_n(x-x^\dagger)\|_{\ell^1}+\beta\|(I-P_n)(x-x^\dagger)\|_{\ell^1}-\|P_nx\|_{\ell^1}-\|(I-P_n)x\|_{\ell^1}\\
&\quad+\|P_nx^\dagger\|_{\ell^1}+\|(I-P_n)x^\dagger\|_{\ell^1}
\end{align*}
together with
\begin{equation*}
\beta\|(I-P_n)(x-x^\dagger)\|_{\ell^1}\leq\beta\|(I-P_n)x\|_{\ell^1}+\beta\|(I-P_n)x^\dagger\|_{\ell^1}
\end{equation*}
and
\begin{equation*}
\|P_nx^\dagger\|_{\ell^1}=\|P_n(x-x^\dagger-x)\|_{\ell^1}\leq\|P_n(x-x^\dagger)\|_{\ell^1}+\|P_nx\|_{\ell^1}
\end{equation*}
shows
\begin{align*}
\lefteqn{\beta\|x-x^\dagger\|_{\ell^1}-\|x\|_{\ell^1}+\|x^\dagger\|_{\ell^1}}\\
&\qquad\leq 2\|(I-P_n)x^\dagger\|_{\ell^1}+(1+\beta)\|P_n(x-x^\dagger)\|_{\ell^1}\\
&\qquad\quad\,-(1-\beta)\bigl(\|(I-P_n)x\|_{\ell^1}+\|(I-P_n)x^\dagger\|_{\ell^1}\bigr).
\end{align*}
Combining this estimate with the previous estimate \eqref{eq:pm} and taking into account that $\beta=\frac{1-\mu}{1+\mu}$
and $\mu=\frac{1-\beta}{1+\beta}$ we obtain for all $x \in \ell^1$
\begin{align*}
\beta\|x-x^\dagger\|_{\ell^1}-\|x\|_{\ell^1}+\|x^\dagger\|_{\ell^1}
&\leq 2\|(I-P_n)x^\dagger\|_{\ell^1}+\frac{2}{1+\mu}\,\gamma_n\|Ax-Ax^\dagger\|_Y\\
&\leq 2\|(I-P_n)x^\dagger\|_{\ell^1}+2\gamma_n\|Ax-Ax^\dagger\|_Y.
\end{align*}
Taking the infimum over all $n\in\mathbb{N}$ completes the proof.
\end{proof}
The variational source condition immediately yields convergence rates according to Proposition \ref{pro:l1rates}.
\begin{corollary}
Under the conditions of Theorem~\ref{thm:rates} the $\ell^1$-regularized solutions $\xad$ as minimizers of \eqref{eq:TIK} fulfil
\begin{equation*}
\|\xad-\xdag\|_{\ell^1} = O(\varphi(\delta)) \quad \mbox{as} \quad \delta \to 0\,,
\end{equation*}
with the concave index function $\varphi$ from \eqref{eq:generalphi}, whenever the regularization parameter $\alpha$ is chosen either a priori as $\alpha=\alpha_{APRI}$ according to \eqref{eq:alphaapriori} or a posteriori as $\alpha=\alpha_{SDP}$ according to \eqref{eq:sdp}.
\end{corollary}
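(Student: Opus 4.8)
The plan is to obtain the corollary as an immediate concatenation of two results already in place: Theorem~\ref{thm:rates}, which manufactures a variational source condition, and Proposition~\ref{pro:l1rates}, which converts any such condition into a convergence rate. There is essentially no new mathematics to do; the substance was already spent on Theorem~\ref{thm:rates}. Accordingly, the only genuine checks are that the regularized solutions $\xad$ exist in the present non-reflexive image-space setting and that the constant $\beta$ produced by the theorem lies in the range demanded by Proposition~\ref{pro:l1rates}.

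First I would record well-definedness of the $\xad$. Under the hypotheses of Theorem~\ref{thm:rates} the space $Y=Z^\ast$ has a separable predual $Z$, so Lemma~\ref{lem:Alaoglu} applies with $X=\ell^1$ and predual $c_0$: bounded sequences in $\ell^1$ admit weak$^*$-convergent subsequences, whence the sublevel sets of the penalty $\|\cdot\|_{\ell^1}$ are weak$^*$ compact. Since $x\mapsto\frac{1}{p}\|Ax-y^\delta\|_Y^p$ is weak$^*$ lower semicontinuous (the norm on $Y$ is weakly$^*$ lower semicontinuous and $A$ is weak$^*$-to-weak$^*$ continuous), the Tikhonov functional in \eqref{eq:TIK} attains a minimizer, and strict convexity for $p>1$ makes it unique. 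This reproduces the existence and uniqueness parts of Proposition~\ref{pro:prorep} in the weak$^*$-to-weak$^*$ framework, as already anticipated in the discussion preceding Theorem~\ref{thm:rates}.

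Next I would invoke Theorem~\ref{thm:rates} verbatim, its hypotheses being identical to those of the corollary. It supplies a variational source condition \eqref{eq:VI} valid for all $x\in\ell^1$ with constant $\beta=\frac{1-\mu}{1+\mu}$ and the concave index function $\varphi$ from \eqref{eq:generalphi}. To be sure $\varphi$ is a bona fide concave index function I would appeal to Lemma~\ref{lem:concave}, first replacing $\{\gamma_n\}_{n\in\mathbb{N}}$ if necessary by its running maximum $\max_{m\le n}\gamma_m$; this preserves item (c) of Property~\ref{property_ill_posedness} while rendering the sequence non-decreasing, so the hypothesis of Lemma~\ref{lem:concave} is met.

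Finally I would verify the admissibility of $\beta$ and close the argument. Because $\mu\in[0,1)$, the constant $\beta=\frac{1-\mu}{1+\mu}$ lies in $(0,1]$, hence satisfies $0<\beta\le 1$, which is exactly the hypothesis of Proposition~\ref{pro:l1rates}; the endpoint $\beta=1$ (attained at $\mu=0$) is included and causes no trouble. Applying Proposition~\ref{pro:l1rates} then yields $\|\xad-\xdag\|_{\ell^1}=O(\varphi(\delta))$ as $\delta\to 0$ for both the a priori choice $\alpha=\alpha_{APRI}$ of \eqref{eq:alphaapriori} and the a posteriori choice $\alpha=\alpha_{SDP}$ of \eqref{eq:sdp}, which is the assertion. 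The hardest part is thus not an estimate at all but the bookkeeping around existence of $\xad$ in the non-reflexive case, which is precisely why the separable predual $Z$ is carried as a standing assumption.
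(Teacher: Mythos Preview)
Your proposal is correct and follows exactly the paper's approach: the paper treats the corollary as an immediate consequence of Theorem~\ref{thm:rates} combined with Proposition~\ref{pro:l1rates}, without a separate proof environment. Your added bookkeeping on existence of $\xad$, monotonicity of $\{\gamma_n\}$, and the range of $\beta$ is sound and simply makes explicit what the paper leaves implicit.
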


In order to familiarize the reader with the concepts in this work we will look at a particular operator to exemplify our theory. In particular we verify Property \ref{property_ill_posedness}.

\begin{example}{\rm
Let $X=Y=\ell^1$ and
\[
[Ax]_k=x_k+x_{k+1}\qquad k\in\mathbb{N},
\]
i.e., $A$ maps $x=(x_1,x_2,x_3,\dots)$ to $Ax=(x_1+x_2,x_2+x_3,x_3+x_4,\dots)$. Clearly $A$ is linear. Observe that $||Ax||_{\ell^1}\leq 2||x||_{\ell^1}$ and hence $A$ is bounded. One easily verifies the adjoint $A^\ast:\ell^\infty\rightarrow\ell^\infty$,
\[
A^\ast y=(y_1,y_2+y_1,y_3+y_2,y_3+y_4,\dots).
\]
Both $A$ and $A^\ast$ are injective. Since $\overline{\mathcal{R}(A)}^{\ell^1}=\mathcal{N}(A^\ast)_\perp$, where
\[
\mathcal{N}(A^\ast)_\perp:=\{ x\in\ell^1:\langle y,x\rangle_{\ell^\infty\times\ell^1}=0\;\, \forall y\in\mathcal{N}(A^\ast)\}=\ell^1
\]
we have $\overline{\mathcal{R}(A)}^{\ell^1}=\ell^1$. It is however easy to see that $\mathcal{R}(A)\neq \ell^1$. For example there is no $x\in\ell^1$ such that $Ax=e^{(2)}$. Namely, solving $Ax=e^{(2)}$ for $x$ leads to the system of equations $x_1=-x_2$, $x_2=1-x_3$, $x_4=-x_3$, $x_5=-x_4$, etc. Due to the alternating character again there is no $x\in\ell^1$ that satisfies this system. We have shown that $\overline{\mathcal{R}(A)}^{\ell^1}\neq\mathcal{R}(A)$, i.e., the corresponding operator equation \eqref{eq:opeq} is ill-posed.

Next we prove that $A$ is weak$^\ast$-to-weak$^\ast$ continuous but not weak$^\ast$-to-weak continuous. To this end we use the properties (i) in Proposition \ref{pro:flid1} and Proposition \ref{pro:flid2}, respectively. First let $\xi\in c_0$. Then, with
\[
[Ae^{(k)}]_i=\begin{cases} 1 & i=k,k-1\\ 0 & else\end{cases} \qquad \forall i\geq 2
\]
it is
\[
\langle \xi,Ae^{(k)}\rangle_{c_0\times\ell^1}= \xi_{k-1}+\xi_k\rightarrow 0\,,
\]
since $\xi\in c_0.$ For $\xi\in\ell^\infty$, however,
\[
\langle \xi,Ae^{(k)}\rangle_{\ell^\infty\times\ell^1}= \xi_{k-1}+\xi_k
\]
does in general not converge to zero (let, e.g., $\xi\equiv 1$). Summarizing the properties of the forward operator for the present example, we note that $A$ is linear, injective, weak$^\ast$-to-weak$^\ast$ continuous, but not weak$^\ast$-to-weak continuous. Moreover its range
is not closed such that the corresponding operator equation \eqref{eq:opeq} is ill-posed.

For this particular operator let us investigate Property \ref{property_ill_posedness}. We will see in the following that this actually holds with $\mu=0$ and $\gamma_n=n$, i.e., $e^{(k)}\in\mathcal{R}(A^\ast)$ for all $k\in\mathbb{N}$. We will also show that $e^{(k)}\in\overline{\mathcal{R}(A^\ast\vert_{c_0})}^{\ell^\infty}$ according to item (iii) of Proposition \ref{prop:weakstarweakstar}. Even then we still have $\gamma_n=n$ for arbitrary $0<\mu<1$.

Fix an arbitrary $n\in\mathbb{N}$ and let $\xi=(\xi_1,\xi_2,\dots,\xi_n,0,\dots)\in\ell^\infty$ with $\xi_i\in[-1,1]$. We are looking for an $\eta\in\ell^\infty$ satisfying Property \ref{property_ill_posedness}. Observe that, by definition of $A^\ast$ and for given $\xi$, any $\eta$ satisfying $P_nA^\ast \eta=\xi$ has the structure $\eta_1=\xi_1$, $\eta_2=\xi_2-\eta_1$, $\eta_3=\xi_3-\eta_2$ and so on until $\eta_n=\xi_n-\eta_{n-1}$. In other words it is $\eta_n=\sum_{i=1}^n (-1)^{n-i} \xi_i$ and
\[
||\eta_n||_{\ell^\infty}\leq n,
\]
which yields item (c) of Property \ref{property_ill_posedness} with $\gamma_n= n$. Now fix arbitrary $0<\mu<1$. We have $[A^\ast\eta]_{n+1}=\eta_{n+1}+\eta_n$ and require $|\eta_{n+1}+\eta_n|\leq\mu$. Thus we may take any $\eta_{n+1}$ with
\[
-\eta_n-\mu\leq \eta_{n+1}\leq -\eta_n+\mu.
\]
Analogously we find that in general
\[
(-1)^{i}\eta_n-i\mu\leq \eta_{n+i}\leq (-1)^{i} \eta_n+i\mu, \quad i=1,2,\dots.
\]
Therefore, the choice of the tail of $\eta$ is ambiguous. For example, a viable pick is $\eta_{n+i}=(-1)^{i}\eta_n$. Then
\begin{equation}\label{eq:eta}
\eta=(\eta_1, \eta_2,\dots, \eta_n, -\eta_n,\eta_n,-\eta_n,\dots)
\end{equation}
with $\eta_i$, $1\leq i\leq n$, as above and
\[
A^\ast\eta=(\xi_1,\xi_2,\dots,\xi_n,0,0,0,\dots).
\]
In particular, this means that $e^{(k)}\in\mathcal{R}(A^\ast)$ (choose $\xi_i=1$ and $\xi_j=0$ for $i\neq j$). Note that $\eta\in\ell^\infty$ but $\eta\notin c_0$ in \eqref{eq:eta}. However, we also see that for any $\xi$ and arbitrary $0<\mu<1$ there are (infinitely many) choices for the tail of $\eta$ such that $\eta\in c_0$ and item (b) of Property \ref{property_ill_posedness} holds. Independent of $\mu$, all choices satisfy  item (c) of Property \ref{property_ill_posedness} with $\gamma_n=n$. To set this into relation, we would obtain the same $\gamma_n$ for a diagonal operator $\tilde A:\ell^2\rightarrow\ell^2$ with singular values decaying as $\sigma_i\sim \frac{1}{\sqrt{i}}$, see \cite{Ger17}.}
\end{example}
Please note that in practice it is not necessary to verify Property \ref{property_ill_posedness} in the way we did here. In particular the sequential discrepancy principle \eqref{eq:sdp} does not require the knowledge of any of the parameters from Property \ref{property_ill_posedness} in order to guarantee the convergence rates implied by that property.

For the sake of completeness we mention that there exist bounded linear operators which are not even weak$^*$-to-weak$^*$ continuous.

\begin{example}{\rm
If $Y=\ell^1$ and
\begin{align*}
[Ax]_k:=\begin{cases}\sum\limits_{l=1}^\infty x_l,&\text{if }k=1,\\x_k,&\text{else},\end{cases}
\end{align*}
for all $k$ in $\mathbb{N}$ and all $x$ in $\ell^1$, then $Ae^{(k)}=e^{(1)}+e^{(k)}$ if $k>1$. Thus, $Ae^{(k)}\rightharpoonup^\ast e^{(1)}\neq 0$.
The same operator $A$ considered as mapping into $Y=\ell^2$ is an example of a not weak$^*$-to-weak continuous bounded linear operator in the classical Hilbert space setting for $\ell^1$-regularization. Note that, because of the first component, $A$ does not have a bounded extension to any $\ell^p$-space with $1< p<\infty$.}
\end{example}

\section{Well-posedness and further discussions} \label{sec:6}

\begin{proposition} \label{pro:well}
If the problem \eqref{eq:opeq} is well-posed, i.e. $\mathcal{R}(A) = \overline{\mathcal{R}(A)}^Y$, then under the conditions of Theorem~\ref{thm:rates} the $\ell^1$-regularized solutions $\xad$ as minimizers of \eqref{eq:TIK} fulfil
\begin{equation*}
\|\xad-\xdag\|_{\ell^1} = O(\delta) \quad \mbox{as} \quad \delta \to 0
\end{equation*}
whenever the regularization parameter $\alpha>0$ is chosen either a priori as \linebreak $\alpha=\alpha_{APRI}\sim \delta^{p-1}$ or a posteriori as $\alpha=\alpha_{SDP}$ according to the sequential discrepancy principle \eqref{eq:sdp}.
\end{proposition}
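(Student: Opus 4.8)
The plan is to exploit that well-posedness upgrades the merely \emph{conditional} stability \eqref{eq:CSE} to an \emph{unconditional} one on all of $\ell^1$, and then to feed this into the variational-source-condition machinery with a \emph{linear} rate function. First I would use the closed range hypothesis $\mathcal{R}(A)=\overline{\mathcal{R}(A)}^Y$ together with injectivity: then $A$ is a bounded linear bijection from $\ell^1$ onto the Banach space $\mathcal{R}(A)$, so the bounded inverse theorem supplies a constant $K>0$ with
\[
\|x-\xdag\|_{\ell^1}\le K\,\|Ax-A\xdag\|_Y\qquad\text{for all }x\in\ell^1.
\]
This is precisely the well-posed branch of Proposition~\ref{pro:FHV1}, with $K=1/\underline c$.

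Next I would convert this estimate into a variational source condition \eqref{eq:VI} carried by a linear index function. Setting $\varphi(t):=2Kt$, which is a (weakly) concave index function in the sense used in \eqref{eq:alphaapriori}, I would combine the reverse triangle inequality $\|x\|_{\ell^1}-\|\xdag\|_{\ell^1}\ge-\|x-\xdag\|_{\ell^1}$ with the stability estimate rewritten as $\varphi(\|Ax-A\xdag\|_Y)=2K\|Ax-A\xdag\|_Y\ge 2\|x-\xdag\|_{\ell^1}$ to obtain
\[
\|x\|_{\ell^1}-\|\xdag\|_{\ell^1}+\varphi(\|Ax-A\xdag\|_Y)\ge -\|x-\xdag\|_{\ell^1}+2\|x-\xdag\|_{\ell^1}=\|x-\xdag\|_{\ell^1},
\]
so that \eqref{eq:VI} holds with $\beta=1$ and this linear $\varphi$. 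Equivalently, and more in the spirit of this section, one checks that well-posedness lets one realize Property~\ref{property_ill_posedness} with $\mu=0$ and a \emph{constant} sequence $\gamma_n\equiv C$: the closed range theorem gives $\mathcal{R}(A^\ast)=\mathcal{N}(A)^\perp=\ell^\infty$, and the open mapping theorem applied to the surjection $A^\ast$ bounds the preimages uniformly; since the tail $\sum_{k>n}|\xdag_k|\to 0$, formula \eqref{eq:generalphi} then collapses to the linear bound $\varphi(t)\le 2Ct$.

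Finally I would apply Proposition~\ref{pro:l1rates}: because $\varphi$ is linear, the rate \eqref{eq:phirate} reads $\|\xad-\xdag\|_{\ell^1}=O(\varphi(\delta))=O(\delta)$ for both admissible parameter choices. For the a priori rule this identifies $\alpha_{APRI}=\delta^p/\varphi(\delta)=\delta^{p-1}/(2K)\sim\delta^{p-1}$, matching the statement, whereas $\alpha_{SDP}$ needs no such tuning. For completeness one can bypass \eqref{eq:VI} entirely in the $\alpha_{SDP}$ case: from $\|A\xad-y^\delta\|_Y\le\tau\delta$ and \eqref{eq:noise} one gets $\|A\xad-y\|_Y\le(\tau+1)\delta$, and the stability estimate with $x=\xad$ reproduces the same $O(\delta)$ bound.

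I do not expect a genuinely hard step here; the result is essentially the stability estimate run through the existing theory. The one thing to get right is the bookkeeping that a \emph{linear} $\varphi$ is admissible everywhere it is used — as a concave index function in \eqref{eq:VI}, in Proposition~\ref{pro:l1rates}, and in the a priori rule \eqref{eq:alphaapriori} — and that the constant $K$ (respectively $C$) propagates consistently to the exponent $\delta^{p-1}$.
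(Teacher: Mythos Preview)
Your argument is correct, and your primary route is actually more direct than the paper's. You apply the bounded inverse theorem on the \emph{primal} side to obtain the global stability estimate $\|x-\xdag\|_{\ell^1}\le K\|Ax-A\xdag\|_Y$, and then verify \eqref{eq:VI} with $\beta=1$ and $\varphi(t)=2Kt$ by a two-line triangle-inequality computation. The paper instead works on the \emph{dual} side: from $\mathcal R(A)$ closed it infers $\mathcal R(A^\ast)$ closed, bounds $(A^\ast)^{-1}$ on $V:=\mathcal R(A^\ast)$, and concludes that the constants $\gamma_n$ in Property~\ref{property_ill_posedness} are uniformly bounded (with some $\mu>0$ and hence $\beta<1$), so that \eqref{eq:generalphi} degenerates to a linear $\varphi$ via the proof of Theorem~\ref{thm:rates}. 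Your alternative remark --- that the closed range theorem gives $\mathcal R(A^\ast)=\mathcal N(A)^\perp=\ell^\infty$ and the open mapping theorem then yields Property~\ref{property_ill_posedness} with $\mu=0$ and constant $\gamma_n$ --- is a cleaner variant of the paper's argument and even avoids the paper's tacit use of injectivity of $A^\ast$. What your approach buys is simplicity and $\beta=1$; what the paper's route buys is that it stays entirely inside the Property~\ref{property_ill_posedness} framework developed in Section~\ref{sec:5}, making explicit that well-posedness is precisely the case where $\{\gamma_n\}$ does not blow up. Both land on the same application of Proposition~\ref{pro:l1rates}, and your bookkeeping for $\alpha_{APRI}\sim\delta^{p-1}$ is correct.
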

\begin{proof}
The condition $\mathcal{R}(A) = \overline{\mathcal{R}(A)}^Y$ implies $\mathcal{R}(A^*) = \overline{\mathcal{R}(A^*)}^{\ell^\infty}$ (cf.~\cite[Theorem 3.1.21]{Meg98}) and hence $V:=\mathcal{R}(A^*)$ is a closed
subspace of $\ell^\infty$, which can be considered as a Banach space with the same supremum norm as  $\ell^\infty$. Then, for the injective operator $A:\ell^1 \to Y$, Banach's theorem concerning the continuity of the inverse operator ensures that the linear operator $(A^*)^{-1}: V \to Y^*$ is bounded. Moreover, the elements $\tilde \xi \in \mathcal{R}(A^*)$ in Proposition~\ref{th:ra} associated to $\xi$ from Property~\ref{property_ill_posedness} satisfy the inequality
$\|\tilde \xi\|_{\ell^\infty} \le 1+\varepsilon$, and with $\tilde \xi=A^*\eta $ we have 
\[
\|\eta\|_{Y^*} \le \|(A^*)^{-1}\|_{V \to Y^*}\,(1+\varepsilon) \le K <\infty.
\]
Hence, the sequence $\{\gamma_n\}_{n \in \mathbb{N}}$
in Property~\ref{property_ill_posedness} is uniformly bounded by the finite positive constant $K$. Taking into account the proof of Theorem~\ref{thm:rates} we have with $\beta=\frac{1-\mu}{1+\mu}$ and for all $x \in \ell^1$
\begin{align*}
\beta\|x-x^\dagger\|_{\ell^1} \le \|x\|_{\ell^1}-\|x^\dagger\|_{\ell^1} + 2\|(I-P_n)x^\dagger\|_{\ell^1}+2\,K\,\|Ax-Ax^\dagger\|_Y,
\end{align*}
i.e.~ the variational inequality \eqref{eq:VI} with
$$\varphi(t)=2\,\inf \limits_{n \in \mathbb{N}} \left(\sum \limits_{k=n+1}^\infty |x^\dagger_k|+K\,t \right)= K\,t.$$
This, however, yields by Proposition~\ref{pro:l1rates} the rate \eqref{eq:correctrate}  and completes the proof of the proposition.
\end{proof}

Property \ref{property_ill_posedness} enables us to show convergence rates for $\ell^1$-regularization for ill-posed and well-posed problems with sparse and non-sparse solutions. It has been shown in \cite{Ger17} that the rate function $\varphi$ in \eqref{eq:phirate} does in general not saturate. Even more, the rate is always obtainable either with an a priori or an a posteriori choice of the regularization parameter. This stands in sharp contrast to classical Tikhonov regularization, i.e., \eqref{eq:TIK} with $p=2$ and $||x||_{\ell^2}$ as penalty, which is known to admit convergence rates up to a maximum of $\delta^{2/3}$ for a suitable a priori choice of the regularization parameter and only a rate of $\delta^{1/2}$ under the discrepancy principle. Since the smoothness of the solution is typically unknown, this makes $\ell^1$-regularization more attractive from the viewpoint of regularization theory than its $\ell^2$ counterpart. One simply uses the discrepancy principle and no longer has to care about the smoothness of the solution. However, one may run into trouble when the solution does not belong to $\ell^1$ but only to $\ell^2\backslash \ell^1$ such that $||x^\dag||_{\ell^1}=\infty$. In such a case we call the regularization method \eqref{eq:TIK} \textit{oversmoothing}.

There are promising results showing that even in the situation of oversmoothing, $\ell^1$-regularization may lead to convergence rates in a weaker norm. Again, an a priori choice or the discrepancy principle for the choice of $\alpha$ would lead to the optimal rates. Preliminary results have been shown in the preprint \cite{Ger16}. There, a strategy is shown to derive convergence rates in the $\ell^2$-norm for $\ell^1$-regularization for every $x^\dag\in \ell^2$. The proof of a theorem analogously to Proposition \ref{pro:l1rates} unfortunately was incomplete. It revolves around approximating $x^\dag$ with $P_n x^\dag$ and letting $n=n(\delta)\rightarrow \infty$ as $\delta\rightarrow 0$ with a specific choice of $n=n(\delta)$. The open problem was to show that the support of the approximate solutions is not larger than this $n(\delta)$. It appears that such a statement is possible by using item (c) in
Property~\ref{property_ill_posedness} and the necessary optimality condition for a minimizer of \eqref{eq:TIK}, where the latter provides us with the norm $||\eta||_{Y^*}$ in Property 1 corresponding to a $\xi=A^\ast\eta \in\partial||x_\alpha^\delta||_{\ell^1}$. Since we are not able to use a variational source condition when $||x^\dag||_{\ell^1}=\infty$ we need to use a different approach to show convergence rates. To this end we seem to have a chance to adapt the strategy of \cite{Ger17} based on elementary steps. Consequently, we hope to complete the detailed proof in an upcoming paper \cite{Ger17b}.

\section*{Acknowledgements}
The authors were financially supported by Deutsche Forschungsgemeinschaft (DFG-grant HO 1454/10-1).

%
%
%


\bibliographystyle{plain}
\bibliography{Gerth_Hofmann.bib}

\end{document}